\documentclass[11pt]{article}
%%%%%%%%%%%%%%%%%%%%%%%%%%%%%%%%%%%%%%%%%%%%%%%%%%%%%%%%%%%%%%%%%%%%%%%%%%%%%%%%%%%%%%%%%%%%%%%%%%%%%%%%%%%%%%%%%%%%%%%%%%%%%%%%%%%%%%%%%%%%%%%%%%%%%%%%%%%%%%%%%%%%%%%%%%%%%%%%%%%%%%%%%%%%%%%%%%%%%%%%%%%%%%%%%%%%%%%%%%%%%%%%%%%%%%%%%%%%%%%%%%%%%%%%%%%%
\usepackage{amssymb}
\usepackage{amsmath}
\usepackage{amsfonts}

\setcounter{MaxMatrixCols}{10}
%TCIDATA{OutputFilter=LATEX.DLL}
%TCIDATA{Version=5.50.0.2960}
%TCIDATA{<META NAME="SaveForMode" CONTENT="1">}
%TCIDATA{BibliographyScheme=Manual}
%TCIDATA{Created=Wednesday, August 05, 2009 14:04:53}
%TCIDATA{LastRevised=Friday, April 12, 2013 17:08:35}
%TCIDATA{<META NAME="GraphicsSave" CONTENT="32">}
%TCIDATA{<META NAME="DocumentShell" CONTENT="Standard LaTeX\Blank - Standard LaTeX Article">}
%TCIDATA{Language=American English}
%TCIDATA{CSTFile=LaTeX article.cst}
%TCIDATA{PageSetup=72,72,72,72,0}
%TCIDATA{Counters=arabic,1}
%TCIDATA{<META NAME="PrintViewPercent" CONTENT="100">}
%TCIDATA{AllPages=
%H=36
%F=36
%}

\newtheorem{theorem}{Theorem}

\newtheorem{claim}[theorem]{Claim}

\newtheorem{definition}[theorem]{Definition}

\newtheorem{lemma}[theorem]{Lemma}

\newtheorem{proposition}[theorem]{Proposition}
\newtheorem{remark}[theorem]{Remark}

\newenvironment{proof}[1][Proof]{\noindent\textbf{#1.} }{\ \rule{0.5em}{0.5em}}
\input{tcilatex}
\begin{document}

\begin{center}
\renewcommand{\thefootnote}{} \footnotetext{%
The project was supported by the Research Center, College of Science, King
Saud University.
\par
\textbf{Mathematics Subject Classification (2000).} 53C50, 53A15.
\par
\smallskip
\par
\textbf{Key words and phrases.} Left-invariant affine structures,
left-symmetric algebras, extensions and Cohomologies of Lie algebras and
left-symmetric algebras.}{\large CLASSIFICATION OF COMPLETE LEFT-INVARIANT
AFFINE STRUCTURES ON THE OSCILLATOR GROUP}

\bigskip\ 

\bigskip MOHAMMED GUEDIRI

\vspace{0.5in}
\end{center}

$\mathtt{Abstract.}$\emph{\ The goal of this paper is to provide a method,
based on the theory of extensions of left-symmetric algebras, for
classifying left-invariant affine structures on a given solvable Lie group
of low dimension. To better illustrate our method, we shall apply it to
classify all complete left-invariant affine structures on the oscillator
group. }

\section{Introduction}

It is a well known result (see \cite{auslander}, \cite{milnor}) that a
simply connected Lie group $G$ which admits a complete left-invariant affine
structure, or equivalently $G$ acts simply transitively by affine
transformations on $\mathbb{R}^{n},$ must be solvable. It is also well known
that not every solvable (even nilpotent) Lie group can admit an affine
structure \cite{benoist}. On the other hand, given a simply connected
solvable Lie group $G$ which can admit a complete left-invariant structure,
it is important to classify all such possible structures on $G$.

\smallskip

Our goal in the present paper is to provide a method for classifying
left-invariant affine structures on a given solvable Lie group of low
dimension. Since the classification has been completely achieved up to
dimension four in the nilpotent case (see \cite{friedgold}, \cite{kim}, \cite%
{kuiper}), we shall illustrate our method by applying it to the
classification of all complete left-invariant affine structures on the
remarkable solvable non-nilpotent $4$-dimensional Lie group $O_{4}$, known
as the \emph{oscillator group}. Recall that $O_{4}$ can be viewed as a
semidirect product of the real line with the Heisenberg group. Recall also
that the Lie algebra $\mathcal{O}_{4}$ of $O_{4}$ (that we shall call \emph{%
oscillator algebra}) is the Lie algebra with generators $%
e_{1},e_{2},e_{3},e_{4},$ and with nonzero brackets 
\begin{equation*}
\lbrack e_{1},e_{2}]=e_{3},\ [e_{4},e_{1}]=e_{2},\ \left[ e_{4},e_{2}\right]
=-e_{1}.
\end{equation*}

Since left-invariant affine structures on a Lie group $G$ are in one-to-one
correspondence with left-symmetric structures on its Lie algebra $\mathcal{G}
$ \cite{kim}, we shall carry out the classification of complete
left-invariant affine structures on $O_{4}$ in terms of complete (in the
sense of \cite{segal}) left-symmetric structures on $\mathcal{O}_{4}$.

\bigskip

The paper is organized as follows. In Section 2, we shall recall the notion
of extensions of Lie algebras and its relationship to the notion of $%
\mathcal{G}$-kernels. In Section 3, we shall give some necessary
definitions, notations, and basic results on left-symmetric algebras and
their extensions. In Section 4, we shall consider a complete real
left-symmetric algebra $A_{4}$ whose associated Lie algebra is the
oscillator algebra $\mathcal{O}_{4}$. Using the complexification of $A_{4}$
and some results in \cite{burde} and \cite{kbm} we shall first show that $%
A_{4}$ is not simple. Precisely, we shall show that $A_{4}$ has a proper
two-sided ideal whose associated Lie algebra is isomorphic to the center $%
Z\left( \mathcal{O}_{4}\right) \cong \mathbb{R}$ or the commutator ideal $%
\left[ \mathcal{O}_{4},\mathcal{O}_{4}\right] \cong \mathcal{H}_{3}$ of $%
\mathcal{O}_{4}.$ In the latter case, we shall show that the so-called
center of $A_{4}$ is nontrivial, and therefore we can get $A_{4}$ as a
central (in some sense that will be defined later) extension of a complete $%
3 $-dimensional left-symmetric algebra $A_{3}$ by the trivial left-symmetric
algebra $\mathbb{R}$ (i.e., the vector space $\mathbb{R}$ with the trivial
left-symmetric product).

In Section 5, we shall show that in both cases we have a short exact
sequence (which turns out to be central) of left-symmetric algebras of the
form 
\begin{equation*}
0\rightarrow \mathbb{R}\overset{i}{\rightarrow }A_{4}\overset{\pi }{%
\rightarrow }A_{3}\rightarrow 0,
\end{equation*}%
where $A_{3}$\ is a complete left-symmetric algebra whose Lie algebra is
isomorphic to the Lie algebra$\mathcal{\ E}\left( 2\right) \ $of the group
of Euclidean motions of the plane. We shall then show that, up to
left-symmetric isomorphism, there are only two non-isomorphic complete
left-symmetric structures on $\mathcal{E}\left( 2\right) $, and we shall use
these to carry out all complete left-symmetric structures on $\mathcal{O}%
_{4}.$ We shall see that one of these two left-symmetric structures on $%
\mathcal{E}\left( 2\right) $ yields exactly one complete left-symmetric
structure on $\mathcal{O}_{4}.$ However, the second one yields a
two-parameter family of complete left-symmetric algebras $A_{4}\left(
s,t\right) $ whose associated Lie algebra is $\mathcal{O}_{4},$ and the
conjugacy class of $A_{4}\left( s,t\right) $ is given as follows: $%
A_{4}\left( s^{\prime },t^{\prime }\right) $ is isomorphic to $A_{4}\left(
s,t\right) $ if and only if $\left( s^{\prime },t^{\prime }\right) =\left(
\alpha s,\pm t\right) $ for some $\alpha \in \mathbb{R}^{\ast }.$ By using
the Lie group exponential maps, we shall deduce the classification of all
complete left-invariant affine structures on the oscillator group $O_{4}$ in
terms of simply transitive actions of subgroups of the affine group $%
Aff\left( \mathbb{R}^{4}\right) =GL\left( \mathbb{R}^{4}\right) \ltimes 
\mathbb{R}^{4}$ (see Theorem \ref{thm3}).

\smallskip

Throughout this paper, all vector spaces, Lie algebras, and left-symmetric
algebras are supposed to be over the filed $\mathbb{R}$, unless otherwise
specified. We shall also suppose that all Lie groups are connected and
simply connected.

\section{Extensions of Lie algebras}

Recall that a Lie algebra $\widetilde{\mathcal{G}}$ \ is an extension of the
Lie algebra $\mathcal{G}$ by the Lie algebra $\mathcal{A}$ if there exists a
short exact sequence of Lie algebras%
\begin{equation}
0\rightarrow \mathcal{A}\overset{i}{\rightarrow }\widetilde{\mathcal{G}}%
\overset{\pi }{\rightarrow }\mathcal{G}\rightarrow 0.  \label{seq1}
\end{equation}

In other words, if we identify the elements of $\mathcal{A}$ with their
images in $\widetilde{\mathcal{G}}$ via the injection $i,$ then $\mathcal{A}$
is an ideal in $\widetilde{\mathcal{G}}$ such that $\widetilde{\mathcal{G}}/%
\mathcal{A\cong }\mathcal{G}$ .

Two extensions $\widetilde{\mathcal{G}}_{1}$ and $\widetilde{\mathcal{G}}%
_{2} $ are called equivalent if there exists an isomorphism of Lie algebras $%
\varphi $ such that the following diagram commutes 
\begin{equation*}
\begin{array}{crclclc}
0\longrightarrow & \mathcal{A} & \overset{i_{1}}{\longrightarrow } & 
\widetilde{\mathcal{G}}_{1} & \overset{\pi _{1}}{\longrightarrow } & 
\mathcal{G} & \longrightarrow 0 \\ 
& id_{\mathcal{A}}\downarrow &  & \downarrow \varphi &  & \downarrow id_{%
\mathcal{G}} &  \\ 
0\longrightarrow & \mathcal{A} & \overset{i_{2}}{\longrightarrow } & 
\widetilde{\mathcal{G}}_{2} & \overset{\pi _{2}}{\longrightarrow } & 
\mathcal{G} & \longrightarrow 0%
\end{array}%
\end{equation*}

The notion of extensions of a Lie algebra $\mathcal{G}$ by an abelian Lie
algebra $\mathcal{A}$ is well known (see for instance, the books \cite{de
azcarraga} and \cite{jacobson}). In light of \cite{neeb}, we shall describe
here the notion of extension $\widetilde{\mathcal{G}}$ of a Lie algebra $%
\mathcal{G}$ by a Lie algebra $\mathcal{A}$ which is not necessarily abelian.

Suppose that a vector space extension $\widetilde{\mathcal{G}}$ of a Lie
algebra $\mathcal{G}$ by another Lie algebra $\mathcal{A}$ is known, and we
want to define a Lie structure on $\widetilde{\mathcal{G}}$ in terms of the
Lie structures of $\mathcal{G}$ and $\mathcal{A}$. Let $\sigma :\mathcal{G}%
\rightarrow \widetilde{\mathcal{G}}$ be a section, that is, a linear map
such that $\pi \circ \sigma =id.$ Then the linear map $\Psi :\left(
a,x\right) \mapsto i\left( a\right) +\sigma \left( x\right) $ from $\mathcal{%
A\oplus G}$ onto $\widetilde{\mathcal{G}}$ is an isomorphism of vector
spaces.

For $\left( a,x\right) $ and $\left( b,y\right) $ in $\mathcal{A\oplus G}$,
a commutator on $\widetilde{\mathcal{G}}$ must satisfy%
\begin{eqnarray}
\left[ i\left( a\right) +\sigma \left( x\right) ,i\left( b\right) +\sigma
\left( y\right) \right] &=&i\left( \left[ a,b\right] \right) +\left[ \sigma
\left( x\right) ,i\left( b\right) \right]  \label{eq0} \\
&&+\left[ i\left( a\right) ,\sigma \left( y\right) \right] +\left[ \sigma
\left( x\right) ,\sigma \left( y\right) \right]  \notag
\end{eqnarray}

Now we define a linear map $\phi :\mathcal{G}\rightarrow End\left( \mathcal{A%
}\right) $ by 
\begin{equation}
\phi \left( x\right) a=\left[ \sigma \left( x\right) ,i\left( a\right) %
\right]  \label{eq0.5}
\end{equation}

On the other hand, since 
\begin{equation*}
\pi \left( \left[ \sigma \left( x\right) ,\sigma \left( y\right) \right]
\right) =\pi \left( \sigma \left( \left[ x,y\right] \right) \right) ,
\end{equation*}%
it follows that there exists an alternating bilinear map $\omega :\mathcal{%
G\times G\rightarrow A}$ such that 
\begin{equation*}
\left[ \sigma \left( x\right) ,\sigma \left( y\right) \right] =\sigma \left[
x,y\right] +\omega \left( x,y\right) .
\end{equation*}

In summary, by means of the isomorphism above, $\widetilde{\mathcal{G}}\cong 
\mathcal{A\oplus G}$ and its elements may be denoted by $\left( a,x\right) $
with $a\in \mathcal{A}$ and $x$ is simply characterized by its coordinates
in $\mathcal{G}$. The commutator defined by (\ref{eq0}) is now given by 
\begin{equation}
\left[ \left( a,x\right) ,\left( b,y\right) \right] =\left( \left[ a,b\right]
+\phi \left( x\right) b-\phi \left( y\right) a+\omega \left( x,y\right) ,%
\left[ x,y\right] \right) ,  \label{eq1}
\end{equation}%
for all $\left( a,x\right) \in \widetilde{\mathcal{G}}\cong \mathcal{A\oplus
G}$.

\bigskip

Now, it is easy to see that this is actually a Lie bracket (i.e, it verifies
the Jacobi identity) if and only if the following three conditions are
satisfied

\begin{enumerate}
\item $\phi \left( x\right) \left[ b,c\right] =\left[ \phi \left( x\right)
b,c\right] +\left[ b,\phi \left( x\right) c\right] ,$

\item $\left[ \phi \left( x\right) ,\phi \left( y\right) \right] =\phi
\left( \left[ x,y\right] \right) +ad_{\omega \left( x,y\right) },$

\item $\omega \left( \left[ x,y\right] ,z\right) -\omega \left( x,\left[ y,z%
\right] \right) +\omega \left( y,\left[ x,z\right] \right) =\phi \left(
x\right) \omega \left( y,z\right) +\phi \left( y\right) \omega \left(
z,x\right) +\phi \left( z\right) \omega \left( x,y\right) .$
\end{enumerate}

\begin{remark}
\label{remark1}We see that the condition (1) above is equivalent to say that 
$\phi \left( x\right) $ is a derivation of $\mathcal{A}$. In other words, $%
\mathcal{G}$ is actually acting by derivations, that is, $\phi :\mathcal{G}%
\rightarrow Der\left( \mathcal{A}\right) .$ The condition (2) indicates
clearly that if $\mathcal{A}$ is supposed to be abelian, then $\mathcal{A}$
becomes a $\mathcal{G}$-module in a natural way, because in this case the
linear map $\phi :\mathcal{G}\rightarrow Der\left( \mathcal{A}\right) $
given by $\phi \left( x\right) a=\left[ \sigma \left( x\right) ,i\left(
a\right) \right] $ is well defined. The condition (3) is equivalent to the
fact that, if $\mathcal{A}$ is abelian, $\omega $ is a $2$-cocycle (i.e., $%
\delta _{\phi }\omega =0,$ where $\delta _{\phi }$ refers to the coboundary
operator corresponding to the action $\phi $).
\end{remark}

If now $\sigma ^{\prime }:\mathcal{G}\rightarrow \widetilde{\mathcal{G}}$ is
another section, then $\sigma ^{\prime }-\sigma =\tau $ for some linear map $%
\tau :\mathcal{G}\rightarrow \mathcal{A}$, and it follows that the
corresponding morphism and $2$-cocycle are, respectively, $\phi ^{\prime
}=\phi +ad\circ \tau $ and $\omega ^{\prime }=\omega +\delta _{\phi }\tau +%
\frac{1}{2}\left[ \tau ,\tau \right] ,$ where $ad$ stands here and below (if
there is no ambiguity) for the adjoint representation in $\mathcal{A}$, and
where $\left[ \tau ,\tau \right] $ has the following meaning : Given two
linear maps $\alpha ,\beta :\mathcal{G}\rightarrow \mathcal{A}$, we define $%
\left[ \alpha ,\beta \right] \left( x,y\right) =\left[ \alpha \left(
x\right) ,\beta \left( y\right) \right] -\left[ \alpha \left( y\right)
,\beta \left( x\right) \right] .$ In particular, we have $\frac{1}{2}\left[
\tau ,\tau \right] \left( x,y\right) =\left[ \tau \left( x\right) ,\tau
\left( y\right) \right] $. Note here that the Lie algebra $\mathcal{A}$ is
not necessarily abelian. Therefore, $\omega ^{\prime }-\omega $ is a $2$%
-coboundary if and only if $\left[ \tau \left( x\right) ,\tau \left(
y\right) \right] =0$ for all $x,y\in \mathcal{G}$. Equivalently, $\omega
^{\prime }-\omega $ is a $2$-coboundary if and only if $\tau $ has its range
in the center $Z\left( \mathcal{A}\right) $ of $\mathcal{A}$. In that case,
we get $\omega ^{\prime }-\omega =\delta _{\phi }\tau \in B_{\phi
}^{2}\left( \mathcal{G},Z\left( \mathcal{A}\right) \right) ,$ the group of $%
2 $-coboundaries for $\mathcal{G}\ $with values in $Z\left( \mathcal{A}%
\right) .$

\smallskip

To overcome all these difficulties, we proceed as follows. Let $C^{2}\left( 
\mathcal{G},\mathcal{A}\right) $ be the abelian group of all $2$-cochains,
i.e. alternating bilinear mappings $\mathcal{G}\times \mathcal{G}\rightarrow 
\mathcal{A}$. For a given $\phi :\mathcal{G}\rightarrow Der\left( \mathcal{A}%
\right) ,$ let $T_{\phi }\in C^{2}\left( \mathcal{G},\mathcal{A}\right) $ be
defined by 
\begin{equation*}
T_{\phi }\left( x,y\right) =\left[ \phi \left( x\right) ,\phi \left(
y\right) \right] -\phi \left( \left[ x,y\right] \right) ,\ \ \ \text{for all 
}x,y\in \mathcal{G}.
\end{equation*}

If there exists some $\omega \in C^{2}\left( \mathcal{G},\mathcal{A}\right) $
such that $T_{\phi }=ad\circ \omega ~$and $\delta _{\phi }\omega =0,$ then
the pair $\left( \phi ,\omega \right) $ is called a \textsl{factor system }%
for\textsl{\ }$\left( \mathcal{G},\mathcal{A}\right) .$ \textsl{\ }Let $%
Z^{2}\left( \mathcal{G},\mathcal{A}\right) $ be the set of all factor
systems for $\left( \mathcal{G},\mathcal{A}\right) .$ It is well known that
the equivalence classes of extensions of a Lie algebra $\mathcal{G}$ by a
Lie algebra $\mathcal{A}$ are in one-to-one correspondence with the elements
of the quotient space $Z^{2}\left( \mathcal{G},\mathcal{A}\right)
/C^{1}\left( \mathcal{G},\mathcal{A}\right) ,$ where $C^{1}\left( \mathcal{G}%
,\mathcal{A}\right) $ is the space of linear maps from $\mathcal{G}$ into $%
\mathcal{A}$ (see for instance \cite{neeb}, Theorem II.7). Note that if we
assume that $\mathcal{A}$ is abelian, then we meet the well known result
(see for instance \cite{chevalley}) stating that for a given action$\ \ \phi
:\mathcal{G}\rightarrow End\left( \mathcal{A}\right) ,$ the equivalence
classes of extensions of $\mathcal{G}$ by $\mathcal{A}$ are in one-to-one
correspondence with the elements of the second cohomology group 
\begin{equation*}
H_{\phi }^{2}\left( \mathcal{G},\mathcal{A}\right) =Z_{\phi }^{2}\left( 
\mathcal{G},\mathcal{A}\right) /B_{\phi }^{2}\left( \mathcal{G},\mathcal{A}%
\right) .
\end{equation*}

In the present paper, we shall be concerned with the special case where $%
\mathcal{A}$ is non-abelian and $\mathcal{G}$ is $\mathbb{R}$, and
henceforth the cocycle $\omega $ is identically zero.

\begin{remark}
It is worth noticing that the construction above is closely related to the
notion of $\mathcal{G}$-kernels (considered for Lie algebras firstly in \cite%
{mori}) . On $\left\{ \phi :\mathcal{G}\rightarrow Der\left( \mathcal{A}%
\right) :T_{\phi }=ad\circ \omega ,\ \text{for some }\omega \in C^{2}\left( 
\mathcal{G},\mathcal{A}\right) \right\} ,$ define an equivalence relation by 
$\phi \sim \phi ^{\prime }~$if and only if $\phi ^{\prime }=\phi +ad\circ
\tau ,\ $for some linear map $\tau :\mathcal{G}\rightarrow \mathcal{A}$. The
equivalence class $\left[ \phi \right] $ of $\phi $ is called a $\mathcal{G}$%
\emph{-kernel}. It turns out that if$\mathcal{A}$ is abelian, then a $%
\mathcal{G}$-kernel is nothing but a $\mathcal{G}$-module. By considering
the quotient morphism $\Pi :Der\left( \mathcal{A}\right) \rightarrow
Out\left( \mathcal{A}\right) =Der\left( \mathcal{A}\right) /ad_{\mathcal{A}%
}, $ and remarking that $\Pi \circ ad\circ \tau =0$ for any linear map $\tau
:\mathcal{G}\rightarrow \mathcal{A}$, we can naturally associate to each $%
\mathcal{G}$-kernel $\left[ \phi \right] $ the morphism $\phi =\Pi \circ %
\left[ \phi \right] :\mathcal{G}\rightarrow Out\left( \mathcal{A}\right) .$
\end{remark}

\section{Left-symmetric algebras}

The notion of a \emph{left-symmetric algebra} arises naturally in various
areas of mathematics and physics. It originally appeared in the works of
Vinberg \cite{vinberg} and Koszul \cite{koszul} concerning convex
homogeneous cones and bounded homogeneous domains, respectively. It also
appears, for instance, in connection with Yang-Baxter equation and
integrable hydrodynamic systems (cf. \cite{bordemann}, \cite{gol-sokolov}, 
\cite{kupershmidt}).

A left-symmetric algebra $\left( A,.\right) $ is a finite-dimensional
algebra $A$ in which the products, for all $x,y,z\in A,$ satisfy the identity

\begin{equation}
\left( xy\right) z-x\left( yz\right) =\left( yx\right) z-y\left( xz\right) ,
\label{eq2}
\end{equation}%
where here and frequently during this paper we simply write $xy$ instead of $%
x\cdot y$.

It is clear that an associative algebra is a left-symmetric algebra.
Actually, if $A$ is a left-symmetric algebra and $\left( x,y,z\right)
=\left( xy\right) z-x\left( yz\right) $ is the associator of $x,y,z$, then
we can see that (\ref{eq2}) is equivalent to $\left( x,y,z\right) =\left(
y,x,z\right) $ This means that the notion of a left-symmetric algebra is a
natural generalization of the notion of an associative algebra.

\smallskip

Now if $A$ is a left-symmetric algebra, then the commutator 
\begin{equation}
\left[ x,y\right] =xy-yx  \label{eq3}
\end{equation}%
defines a structure of Lie algebra on $A,$ called the \emph{associated Lie
algebra}. On the other hand, if $\mathcal{G}$ is a Lie algebra with a
left-symmetric product $\cdot $ satisfying 
\begin{equation*}
\left[ x,y\right] =x\cdot y-y\cdot x,
\end{equation*}%
then we say that the left-symmetric structure is \emph{compatible }with the
Lie structure on $\mathcal{G}$.

Suppose now we are given a Lie group $G$ with a left-invariant flat affine
connection $\nabla ,$ and define a product $\cdot $ on the Lie algebra $%
\mathcal{G}$ of $G$ by%
\begin{equation}
x\cdot y=\nabla _{x}y,  \label{eq4}
\end{equation}%
for all $x,y\in \mathcal{G}$. Then, the conditions on the connection $\nabla 
$ for being flat and torsion-free are now equivalent to the conditions (\ref%
{eq2}) and (\ref{eq3}), respectively.

Conversely, suppose that $G$ is a simply connected Lie group with Lie
algebra $\mathcal{G}$, and suppose that $\mathcal{G}$ is endowed with a
left-symmetric product $\cdot $ which is compatible with the Lie bracket of $%
\mathcal{G}$. We define an operator $\nabla $ on $\mathcal{G}$ according to
identity (\ref{eq4}), and then we extend it by left-translations to the
whole Lie group $G$. This clearly defines a left-invariant flat affine
structure on $G.$ In summary, for a given simply connected Lie group $G$
with Lie algebra $\mathcal{G}$, the left-invariant flat affine structures on 
$G$ are in one-to-one correspondence with the left-symmetric structures on $%
\mathcal{G}$ compatible with the Lie structure.

\smallskip

Let $A$ be a left-symmetric algebra, and let the left and right
multiplications $L_{x}$ and $R_{x}$ by the element $x$ be defined by $%
L_{x}y=x\cdot y$ and $R_{x}y=y\cdot x.$ We say that $A$ is \emph{complete}
if $R_{x}$ is a nilpotent operator, for all $x\in A.$ It turns out that, for
a given simply connected Lie group $G$ with Lie algebra $\mathcal{G}$, the
complete left-invariant flat affine structures on $G$ are in one-to-one
correspondence with the complete left-symmetric structures on $\mathcal{G}$
compatible with the Lie structure (see for example \cite{kim}). It is also
known that an $n$-dimensional simply connected Lie group admits a complete
left-invariant flat affine structure if and only if it acts simply
transitively on $\mathbb{R}^{n}$ by affine transformations (see \cite{kim}).
A simply connected Lie group which is acting simply transitively on $\mathbb{%
R}^{n}$ by affine transformations must be solvable according to \cite%
{auslander}, but it is worth noticeable that there exist solvable (even
nilpotent) Lie groups which do not admit affine structures (see \cite%
{benoist}).

\smallskip

We close this section by fixing some notations which we will use in what
follows. For a left-symmetric algebra $A$, we can easily check that the
subset 
\begin{equation}
T\left( A\right) =\left\{ x\in A:L_{x}=0\right\}  \label{T(A)}
\end{equation}%
is a two-sided ideal in $A.$ Geometrically, if $G$ is a Lie group which acts
simply transitively on $\mathbb{R}^{n}$ by affine transformations then $%
T\left( \mathcal{G}\right) $ corresponds to the set of translational
elements in $G,$ where $\mathcal{G}$ is endowed with the complete
left-symmetric product corresponding to the action of $G$ on $\mathbb{R}%
^{n}. $ It has been conjectured in \cite{auslander} that every nilpotent Lie
group $G$ which acts simply transitively on $\mathbb{R}^{n}$ by affine
transformations contains a translation which lies in the center of $G$, but
this conjecture turned out to be false (see \cite{fried}).

\subsection{Extensions of left-symmetric algebras}

We have discussed in the last section the problem of extension of a Lie
algebra by another Lie algebra. Similarly, we shall briefly discuss in this
section the problem of extension of a left-symmetric algebra by another
left-symmetric algebra. To our knowledge, the notion of extensions of
left-symmetric algebras has been considered for the first time in \cite{kim}%
, to which we refer for more details.

Suppose we are given a vector space $A$ as an extension of a left-symmetric
algebra $K$ by another left-symmetric algebra $E$ . We want to define a
left-symmetric structure on $A$ in terms of the left-symmetric structures
given on $K$ and $E.$ In other words, we want to define a left-symmetric
product on $A$ for which $E$ becomes a two-sided ideal in $A$ such that $%
A/E\cong K;$ or equivalently, 
\begin{equation*}
0\rightarrow E\rightarrow A\rightarrow K\rightarrow 0
\end{equation*}%
becomes a short exact sequence of left-symmetric algebras.

\begin{theorem}[\protect\cite{kim}]
\label{thm1}There exists a left-symmetric structure on $A$ extending a
left-symmetric algebra $K$ by a left-symmetric algebra $E$ if and only if
there exist two linear maps $\lambda ,$ $\rho :K\rightarrow End\left(
E\right) $ and a bilinear map $g:K\times K\rightarrow E$ such that, for all $%
x,y,z\in K$ and $a,b\in E,$ the following conditions are satisfied.

\begin{description}
\item[(i)] $\lambda _{x}\left( a\cdot b\right) =\lambda _{x}\left( a\right)
\cdot b+a\cdot \lambda _{x}\left( b\right) -\rho _{x}\left( a\right) \cdot
b, $

\item[(ii)] $\rho _{x}\left( \left[ a,b\right] \right) =a\cdot \rho
_{x}\left( b\right) -b\cdot \rho _{x}\left( a\right) ,$

\item[(iii)] $\left[ \lambda _{x},\lambda _{y}\right] =\lambda _{\left[ x,y%
\right] }+L_{g\left( x,y\right) -g\left( y,x\right) },$ where $L_{g\left(
x,y\right) -g\left( y,x\right) }$ denotes the left multiplication in $E$ by $%
g\left( x,y\right) -g\left( y,x\right) .$

\item[(iv)] $\left[ \lambda _{x},\rho _{y}\right] =\rho _{x\cdot y}-\rho
_{y}\circ \rho _{x}+R_{g\left( x,y\right) },$ where $R_{g\left( x,y\right) }$
denotes the right multiplication in $E$ by $g\left( x,y\right) .$

\item[(v)] $g\left( x,y\cdot z\right) -g\left( y,x\cdot z\right) +\lambda
_{x}\left( g\left( y,z\right) \right) -\lambda _{y}\left( g\left( x,z\right)
\right) -g\left( \left[ x,y\right] ,z\right) $

\item $-\rho _{z}\left( g\left( x,y\right) -g\left( y,x\right) \right) =0.$
\end{description}
\end{theorem}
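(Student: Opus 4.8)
The plan is to mimic, in the left-symmetric setting, exactly the derivation carried out for Lie-algebra extensions in Section~2, now keeping track of the left multiplication operators on both sides. First I would fix a section $\sigma:K\to A$ of the projection $\pi$, so that $\Psi:(a,x)\mapsto i(a)+\sigma(x)$ identifies $A$ with $E\oplus K$ as a vector space and makes $E=\{(a,0)\}$ a two-sided ideal with $A/E\cong K$. I would then write the general left-symmetric product on $A$ in the only form it can take once $E$ is an ideal: for $a,b\in E$ and $x,y\in K$,
\begin{equation*}
(a,x)\cdot(b,y)=\bigl(a\cdot b+\lambda_x(b)+\rho_y(a)+g(x,y),\ x\cdot y\bigr),
\end{equation*}
where $\lambda_x(b):=i^{-1}\bigl(\sigma(x)\cdot i(b)\bigr)$, $\rho_y(a):=i^{-1}\bigl(i(a)\cdot\sigma(y)\bigr)$, and $g(x,y):=i^{-1}\bigl(\sigma(x)\cdot\sigma(y)-\sigma(x\cdot y)\bigr)$, the last being well defined because $\pi$ kills the difference. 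Thus $\lambda,\rho:K\to\mathrm{End}(E)$ are linear and $g:K\times K\to E$ is bilinear; conversely, any such triple defines a bilinear product on $E\oplus K$ for which $E$ is a two-sided ideal projecting onto $K$. The content of the theorem is then precisely the statement that this product is left-symmetric if and only if (i)--(v) hold.

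The next step is the bookkeeping: substitute the displayed product into the left-symmetric identity
\begin{equation*}
(u\cdot v)\cdot w-u\cdot(v\cdot w)=(v\cdot u)\cdot w-v\cdot(u\cdot w)
\end{equation*}
for $u=(a,x)$, $v=(b,y)$, $w=(c,z)$, and collect terms by their ``type.'' The second (i.e.\ $K$-)coordinate reduces to the left-symmetric identity for $K$, which already holds, so it gives nothing new. The first ($E$-)coordinate splits according to how many of $a,b,c$ appear. The terms involving all three of $a,b,c$ give the left-symmetric identity for $E$ (automatic). The terms linear in one of $a,b,c$ and involving two of $x,y,z$ split into three groups by which of $a,b,c$ survives: the group with $a$ surviving yields (i) after using that the $v,u$-swap on the right changes the roles symmetrically; the group with $b$ surviving yields (ii); combining the pieces that are genuinely mixed between $\lambda$ and $\rho$ produces (iv). The terms involving two of $a,b,c$ and one of $x,y,z$ reorganize (using that $E$ is already left-symmetric) into (iii), the operator identity recording the failure of $x\mapsto\lambda_x$ to be a Lie homomorphism in terms of the alternation $g(x,y)-g(y,x)$. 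Finally the terms with no $a,b,c$ — pure in $x,y,z$ — give (v), the ``cocycle'' condition on $g$. I would present this as: the identity, evaluated on a general triple, is equivalent to the simultaneous vanishing of each homogeneous component, and these components are, one for one, (i)--(v).

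The main obstacle is purely organizational rather than conceptual: there are many terms (roughly a dozen per side before cancellation), and one must be careful that $[x,y]=x\cdot y-y\cdot x$ in $K$ and $[a,b]=a\cdot b-b\cdot a$ in $E$ are used to fold pairs of terms into the single expressions appearing in (ii) and (iii)--(v), and that the $\rho_y(a)$ and $\lambda_x(b)$ contributions are correctly matched when the outer multiplication is on the left versus the right. I would do the computation once, carefully, tabulating the coefficient of each monomial type ($aa$-in-$E$, $a$-with-$\lambda\lambda$, $a$-with-$\lambda\rho$, $g$-with-$\lambda$, $g$-with-$g$, etc.), and then read off (i)--(v); the converse direction is the same computation read backwards, since each step is an equivalence. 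For brevity in the paper one typically says ``a straightforward but tedious computation shows that \eqref{eq2} for $A$ is equivalent to (i)--(v)'' and perhaps indicates which term-type yields which identity, which is the level of detail I would include here.
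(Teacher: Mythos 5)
The paper itself gives no proof of this theorem --- it is quoted from \cite{kim} --- so there is no in-text argument to compare against; judged on its own, your strategy (fix a section $\sigma$, define $\lambda_x(b)=\sigma(x)\cdot i(b)$, $\rho_y(a)=i(a)\cdot\sigma(y)$, $g(x,y)=\sigma(x)\cdot\sigma(y)-\sigma(x\cdot y)$, substitute into $(u,v,w)=(v,u,w)$ and split by homogeneous type) is exactly the right one, and it does terminate in conditions (i)--(v). One concrete correction, though: your assignment of term-types to conditions is scrambled. Carrying out the expansion, the component involving \emph{two} elements of $E$ and one of $K$ is what produces (i) (from the pairs $\{b,c\}$ with $x$ and $\{a,c\}$ with $y$) and (ii) (from the pair $\{a,b\}$ with $z$, where $\rho_z(ab)-\rho_z(ba)$ folds into $\rho_z([a,b])$); the component involving \emph{one} element of $E$ and two of $K$ produces (iii) (the $c$-terms, including $g(x,y)\cdot c-g(y,x)\cdot c$, which is where $L_{g(x,y)-g(y,x)}$ enters) and (iv) (the $a$- and $b$-terms, including $b\cdot g(x,z)$, which is where $R_{g(x,y)}$ enters); the all-$E$ component gives the left-symmetric identity of $E$ and the all-$K$ component gives (v). Your text instead attributes (i) and (ii) to the one-$E$-letter group and (iii) to the two-$E$-letter group, which is backwards; since you explicitly defer to a careful tabulation this does not derail the method, but as written the stated correspondence would not survive the computation you propose. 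The rest --- well-definedness of $g$ via $\pi$, the converse as the same computation run in reverse on $E\oplus K$, and the need for $E$ to be a two-sided ideal so that $\lambda,\rho,g$ land in $E$ --- is handled correctly.
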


If the conditions of Theorem \ref{thm1} are fulfilled, then the extended
left-symmetric product on $A\cong K\times E$ is given by%
\begin{equation}
\left( x,a\right) \cdot \left( y,b\right) =\left( x\cdot y,a\cdot b+\lambda
_{x}\left( b\right) +\rho _{y}\left( a\right) +g\left( x,y\right) \right) .
\label{eq5}
\end{equation}

It is remarkable that if the left-symmetric product of $E$ is trivial, then
the conditions of Theorem \ref{thm1} simplify to the following three
conditions:

\begin{description}
\item[(i)] $\left[ \lambda _{x},\lambda _{y}\right] =\lambda _{\left[ x,y%
\right] },$ i.e. $\lambda $ is a representation of Lie algebras,

\item[(ii)] $\left[ \lambda _{x},\rho _{y}\right] =\rho _{x\cdot y}-\rho
_{y}\circ \rho _{x}.$

\item[(iii)] $g\left( x,y\cdot z\right) -g\left( y,x\cdot z\right) +\lambda
_{x}\left( g\left( y,z\right) \right) -\lambda _{y}\left( g\left( x,z\right)
\right) -g\left( \left[ x,y\right] ,z\right) $

\item $-\rho _{z}\left( g\left( x,y\right) -g\left( y,x\right) \right) =0.$
\end{description}

\smallskip In this case, $E$ becomes a $K$-bimodule and the extended product
given in (\ref{eq5}) simplifies too.

Recall that if $K$ is a left-symmetric algebra and $V$ is a vector space,
then we say that $V$ is a $K$-bimodule if there exist two linear maps $%
\lambda ,$ $\rho :K\rightarrow End\left( V\right) $ which satisfy the
conditions (i) and (ii) stated above.

\smallskip

Let $K$ be a left-symmetric algebra, and let $V$ be a $K$-bimodule. Let $%
L^{p}\left( K,V\right) $ be the space of all $p$-linear maps from $K$ to $V,$
and define two coboundary operators $\delta _{1}:L^{1}\left( K,V\right)
\rightarrow L^{2}\left( K,V\right) $ and $\delta _{2}:L^{2}\left( K,V\right)
\rightarrow L^{3}\left( K,V\right) $ as follows : For a linear map $h\in
L^{1}\left( K,V\right) $ we set 
\begin{equation}
\delta _{1}h\left( x,y\right) =\rho _{y}\left( h\left( x\right) \right)
+\lambda _{x}\left( h\left( y\right) \right) -h\left( x\cdot y\right) ,
\label{delta1}
\end{equation}%
and for a bilinear map $g\in L^{2}\left( K,V\right) $ we set 
\begin{eqnarray}
\delta _{2}g\left( x,y,z\right) &=&g\left( x,y\cdot z\right) -g\left(
y,x\cdot z\right) +\lambda _{x}\left( g\left( y,z\right) \right) -\lambda
_{y}\left( g\left( x,z\right) \right)  \label{delta2} \\
&&-g\left( \left[ x,y\right] ,z\right) -\rho _{z}\left( g\left( x,y\right)
-g\left( y,x\right) \right) .  \notag
\end{eqnarray}

It is straightforward to check that $\delta _{2}\circ \delta _{1}=0.$
Therefore, if we set $Z_{\lambda ,\rho }^{2}\left( K,V\right) =\ker \delta
_{2}$ and $B_{\lambda ,\rho }^{2}\left( K,V\right) =\func{Im}\delta _{1}$,
we can define a notion of second cohomology for the actions $\lambda $ and $%
\rho $ by simply setting $H_{\lambda ,\rho }^{2}\left( K,V\right)
=Z_{\lambda ,\rho }^{2}\left( K,V\right) /B_{\lambda ,\rho }^{2}\left(
K,V\right) .$

As in the case of extensions of Lie algebras, we can prove that for given
linear maps $\lambda ,$ $\rho :K\rightarrow End\left( V\right) $, the
equivalence classes of extensions $0\rightarrow V\rightarrow A\rightarrow
K\rightarrow 0$ of $K$ by $V$ are in one-to-one correspondence with the
elements of the second cohomology group $H_{\lambda ,\rho }^{2}\left(
K,V\right) .$

\bigskip 

We close this subsection with the following lemma on completeness of
left-symmetric algebras (see Proposition 3.4 of \cite{changkimlee}). 

\begin{lemma}
\label{completelemma}Let $0\rightarrow E\rightarrow A\rightarrow
K\rightarrow 0$ be a short exact sequence of left-symmetric algebras. Then, $%
A$ is complete if and only if $E$ and $K$ are so.
\end{lemma}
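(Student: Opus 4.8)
The plan is to prove the equivalence by unwinding the definition of completeness (namely that $R_a$ is nilpotent for every $a$) and exploiting the description of the extended product in~(\ref{eq5}). Write the elements of $A\cong K\times E$ as pairs $(x,a)$, and recall that right multiplication by $(y,b)$ acts by
\begin{equation*}
R_{(y,b)}(x,a)=(x,a)\cdot(y,b)=\bigl(R_y x,\; a\cdot b+\lambda_x(b)+\rho_y(a)+g(x,y)\bigr).
\end{equation*}
The first coordinate of $R_{(y,b)}$ is simply $R_y$ acting on $K$, so in the basis adapted to the decomposition $K\times E$ the operator $R_{(y,b)}$ is block upper-triangular, with diagonal blocks $R_y$ on $K$ and $R_b$ on $E$ (the $\rho_y(a)$ term contributes a $K\to E$ off-diagonal piece, and $\lambda_x(b)+g(x,y)$ is an affine-type contribution from the $K$-part into $E$; none of these affect the diagonal blocks). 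Since a block upper-triangular operator is nilpotent if and only if each diagonal block is nilpotent, $R_{(y,b)}$ is nilpotent on $A$ if and only if $R_y$ is nilpotent on $K$ and $R_b$ is nilpotent on $E$.

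From this the two directions follow immediately. If $A$ is complete, then taking $b=0$ shows $R_y$ is nilpotent for all $y\in K$, so $K$ is complete; taking $x$ and $y$ to range only over a complement and noting that for $y=0$ the operator $R_{(0,b)}$ restricted to $E$ is just $R_b$ (and $E$ is a subalgebra, in fact an ideal), one sees $R_b$ is nilpotent for all $b\in E$, so $E$ is complete. Conversely, if both $E$ and $K$ are complete, then for every $(y,b)\in A$ both diagonal blocks $R_y$ and $R_b$ are nilpotent, hence $R_{(y,b)}$ is nilpotent, so $A$ is complete.

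The only point that needs a little care — and what I would regard as the main (though still routine) obstacle — is to justify the block-triangular claim cleanly, i.e.\ to check that in a basis of $A$ obtained by concatenating a basis of $E$ (sitting inside $A$ as the ideal $0\times E$) with lifts of a basis of $K$, the matrix of $R_{(y,b)}$ really has the stated shape: the $E$-block is exactly the matrix of $R_b$ on $E$ because $a\cdot b$ is the product in $E$; the $K$-block is exactly $R_y$ on $K$ by the first coordinate of~(\ref{eq5}); and everything else lands in $E$ and thus sits strictly above the $K$-block. Since $E$ is an ideal, $R_{(y,b)}$ preserves the flag $0\subset E\subset A$, which is all that is needed. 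One should also remark that this argument does not use conditions (i)--(v) of Theorem~\ref{thm1} beyond the mere existence of the extension, so the statement is exactly as general as claimed. (Alternatively, one can avoid matrices entirely: $R_{(y,b)}^N(x,a)$ has first coordinate $R_y^N x$, which vanishes for $N$ large if $K$ is complete, and then $R_{(y,b)}$ maps $A$ into $0\times E$ after finitely many steps, on which it acts as $R_b$ plus the affine shift by $\lambda_\bullet(b)$ applied to a vector already in $E$ — but once we are inside $E$ the $\lambda$ term no longer appears since it only came from the $K$-component — so nilpotency of $R_b$ finishes it.)
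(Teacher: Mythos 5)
Your block-triangular setup mis-identifies the diagonal block on $E$, and this breaks the converse direction. Writing $R_{(y,b)}(x,a)=\bigl(x\cdot y,\ a\cdot b+\lambda _{x}(b)+\rho _{y}(a)+g(x,y)\bigr)$, the terms that land in $E$ and depend on the $K$-component $x$ are $\lambda _{x}(b)+g(x,y)$; these are the genuine off-diagonal $K\rightarrow E$ piece. The term $\rho _{y}(a)$, however, depends on the $E$-component $a$ and lands in $E$, so it sits squarely in the $(E,E)$ diagonal block: the restriction of $R_{(y,b)}$ to the ideal $E$ is $R_{b}+\rho _{y}$, not $R_{b}$. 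So what the block structure actually gives is ``$R_{(y,b)}$ nilpotent iff $R_{y}$ and $R_{b}+\rho _{y}$ are nilpotent,'' not your stated equivalence. The forward implication survives (take $y=0$, which kills $\rho _{y}$, and restrict to $E$; pass to the quotient $A/E$ for $K$), but in the converse you must show that $R_{b}|_{E}+\rho _{y}$ is nilpotent knowing only that $R_{b}|_{E}$ is, and the sum of a nilpotent operator with another operator need not be nilpotent. Note that $\rho _{y}$ is the restriction to $E$ of right multiplication in $A$ by a lift of $y$, i.e.\ by an element \emph{outside} $E$, so completeness of $E$ says nothing about it; indeed the paper itself, in the proof of Lemma \ref{lemma3}, has to invoke completeness of $A_{4}$ (not of $I_{3}$) to conclude that this very operator $\rho $ is nilpotent. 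The same omission occurs in your matrix-free variant: once the iterates land in $E$, the map acting there is $a\mapsto a\cdot b+\rho _{y}(a)$, and the $\rho _{y}$ term does not disappear.

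Thus the ``if'' direction is the real content of the lemma and is not established by your argument; it requires additional input, such as Segal's trace characterization of completeness ($A$ is complete iff $\mathrm{tr}\,R_{x}=0$ for all $x$) combined with the identities (i)--(v) of Theorem \ref{thm1} that control $\rho $ --- precisely the conditions you remark you are not using. The paper does not prove the lemma either: it cites Proposition 3.4 of \cite{changkimlee}. Only the (correct) ``only if'' half of your argument is actually needed in the paper, but as a proof of the stated equivalence the proposal is incomplete.
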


\subsection{Central extensions of left-symmetric algebras}

The notion of central extensions known for Lie algebras may analogously be
defined for left-symmetric algebras. Let $A$ be a left-symmetric extension
of a left-symmetric algebra $K$ by another left-symmetric algebra $E$, and
let $\mathcal{G}$ be the Lie algebra associated to $A.$ Define \textsl{the
center} $C\left( A\right) $ of $A$ to be

\begin{equation}
C\left( A\right) =T\left( A\right) \cap Z\left( \mathcal{G}\right) =\left\{
x\in A:x\cdot y=y\cdot x=0,\ \ \ \text{for all }y\in A\right\} ,
\label{l.s. center}
\end{equation}%
where $Z\left( \mathcal{G}\right) $ is the center of the Lie algebra $%
\mathcal{G}$ and $T\left( A\right) $ is the two-sided ideal of $A$ defined
by (\ref{T(A)}).

\begin{definition}
The extension $0\rightarrow E\overset{i}{\rightarrow }A\overset{\pi }{%
\rightarrow }K\rightarrow 0$ of left-symmetric algebras is said to be 
\textsl{central} (resp. \textsl{exact}) if $i\left( E\right) \subseteq
C\left( A\right) $ (resp. $i\left( E\right) =C\left( A\right) $).\textsl{\ }
\end{definition}

\begin{remark}
It is not difficult to show that if the extension $0\rightarrow E\overset{i}{%
\rightarrow }A\overset{\pi }{\rightarrow }K\rightarrow 0$ is central, then
both the left-symmetric product and the $K$-bimodule on $E$ are trivial
(i.e., $a\cdot b=0$ for all $a,b\in E,$ and $\lambda =\rho =0$). It is also
easy to show that if $\left[ g\right] $ is the cohomology class associated
to this extension, and if 
\begin{equation*}
I_{\left[ g\right] }=\left\{ x\in K:x\cdot y=y\cdot x=0\ \text{and }g\left(
x,y\right) =g\left( y,x\right) =0,\ \text{for all }y\in K\right\} ,
\end{equation*}%
then the extension is exact if and only if $I_{\left[ g\right] }=0$ (see 
\cite{kim}). We note here that $I_{\left[ g\right] }$ is well defined
because any other element in $\left[ g\right] $ takes the form $g+\delta
_{1}h,$ with $\delta _{1}h\left( x,y\right) =-h\left( x\cdot y\right) .$
\end{remark}

Let now $K$ be a left-symmetric algebra, and $E$ a trivial $K$-bimodule.
Denote by $\left( A,\left[ g\right] \right) $ the central extension $%
0\rightarrow E\rightarrow A\rightarrow K\rightarrow 0$ corresponding to the
cohomology class $\left[ g\right] \in H^{2}\left( K,E\right) .$ Let $\left(
A,\left[ g\right] \right) $ and $\left( A^{\prime },\left[ g^{\prime }\right]
\right) $ be two central extensions of $K$ by $E,$ and let $\mu \in
Aut\left( E\right) =GL\left( E\right) $ and $\eta \in Aut\left( K\right) ,$
where $Aut\left( E\right) $ and $Aut\left( K\right) $ are the groups of
left-symmetric automorphisms of $E$ and $K,$ respectively. It is clear that,
if $h\in L^{1}\left( K,E\right) ,$ then the linear mapping $\psi
:A\rightarrow A^{\prime }$ defined by%
\begin{equation*}
\psi \left( x,a\right) =\left( \eta \left( x\right) ,\mu \left( a\right)
+h\left( x\right) \right)
\end{equation*}%
is an isomorphism provided $g^{\prime }\left( \eta \left( x\right) ,\eta
\left( y\right) \right) =\mu \left( g\left( x,y\right) \right) -\delta
_{1}h\left( x,y\right) $ for all $\left( x,y\right) \in K\times K,$ i.e. $%
\eta ^{\ast }\left[ g^{\prime }\right] =\mu _{\ast }\left[ g\right] .$

This allows us to define an action of the group $G=Aut\left( E\right) \times
Aut\left( K\right) $ on $H^{2}\left( K,E\right) $ by setting

\begin{equation}
\left( \mu ,\eta \right) .\left[ g\right] =\mu _{\ast }\eta ^{\ast }\left[ g%
\right] ,  \label{action1}
\end{equation}%
or equivalently, $\left( \mu ,\eta \right) .g\left( x,y\right) =\mu \left(
g\left( \eta \left( x\right) ,\eta \left( y\right) \right) \right) $ for all 
$x,y\in K.$

Denoting the set of all exact central extensions of $K$ by $E$ by%
\begin{equation*}
H_{ex}^{2}\left( K,E\right) =\left\{ \left[ g\right] \in H^{2}\left(
K,E\right) :I_{\left[ g\right] }=0\right\} ,
\end{equation*}%
and the orbit of $\left[ g\right] $ by $G_{\left[ g\right] },$ it turns out
that the following result is valid (see \cite{kim}).

\begin{proposition}
\label{prop3} Let $\left[ g\right] $ and $\left[ g^{\prime }\right] $ be two
classes in $H_{ex}^{2}\left( K,E\right) .$ Then, the central extensions $%
\left( A,\left[ g\right] \right) $ and $\left( A^{\prime },\left[ g^{\prime }%
\right] \right) $ are isomorphic if and only if $G_{\left[ g\right] }=G_{%
\left[ g^{\prime }\right] }.$ In other words, the classification of the
exact central extensions of $K$ by $E$ is, up to left-symmetric isomorphism,
the orbit space of $H_{ex}^{2}\left( K,E\right) $ under the natural action
of $G=Aut\left( E\right) \times Aut\left( K\right) .$
\end{proposition}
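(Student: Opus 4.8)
The plan is to reduce the statement to the computation carried out just before the Proposition, namely that a linear map $\psi(x,a)=(\eta(x),\mu(a)+h(x))$ with $\mu\in Aut(E)$, $\eta\in Aut(K)$ and $h\in L^{1}(K,E)$ is a left-symmetric isomorphism of the central extension $(A,[g])$ onto $(A',[g'])$ if and only if $\eta^{*}[g']=\mu_{*}[g]$. Two preliminary remarks should be recorded first. Since $\eta$ is an algebra automorphism of $K$ and $\mu$ is bijective, one checks directly that $I_{(\mu,\eta).[g]}=\eta^{-1}(I_{[g]})$; hence the $G$-action (\ref{action1}) preserves the subset $H^{2}_{ex}(K,E)$ and the orbit space appearing in the statement is meaningful. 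Moreover $\mu_{*}$ and $\eta^{*}$ commute and $(\eta^{*})^{-1}=(\eta^{-1})^{*}$, so (\ref{action1}) really is a left action of $G=Aut(E)\times Aut(K)$; both facts are routine.

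For the forward implication, let $\Phi:A\to A'$ be a left-symmetric algebra isomorphism. The essential point is that both extensions are \emph{exact}, so $i(E)=C(A)$ and $i'(E)=C(A')$ are the centers in the sense of (\ref{l.s. center}); since the center is defined intrinsically through the products, $\Phi(C(A))=C(A')$, whence $\Phi(i(E))=i'(E)$. Consequently $\mu:=(i')^{-1}\circ\Phi\circ i\in Aut(E)$, and $\Phi$ descends to a left-symmetric automorphism $\eta$ of $K\cong A/i(E)\cong A'/i'(E)$. Fixing linear sections, identify $A$ and $A'$ with $K\times E$ so that the products are given by (\ref{eq5}) with the trivial $K$-bimodule (the extensions being central). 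Then $\Phi(i(E))=i'(E)$ forces $\Phi(x,a)=(\eta(x),\mu(a)+h(x))$ for a unique linear $h:K\to E$, and expanding $\Phi((x,a)\cdot(y,b))=\Phi(x,a)\cdot\Phi(y,b)$ with $\lambda=\rho=0$ and $\delta_{1}h(x,y)=-h(x\cdot y)$ (from (\ref{delta1})) yields precisely $g'(\eta(x),\eta(y))=\mu(g(x,y))-\delta_{1}h(x,y)$, i.e. $\eta^{*}[g']=\mu_{*}[g]$. Applying $(\eta^{-1})^{*}$ and using commutativity gives $[g']=(\mu,\eta^{-1}).[g]$, so $[g]$ and $[g']$ lie in one $G$-orbit and $G_{[g]}=G_{[g']}$.

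For the converse, $G_{[g]}=G_{[g']}$ means $[g']=(\mu,\eta).[g]=\mu_{*}\eta^{*}[g]$ for some $\mu\in Aut(E)$ and $\eta\in Aut(K)$; setting $\bar\eta:=\eta^{-1}$ and using commutativity gives $\bar\eta^{*}[g']=\mu_{*}[g]$, so there is $h\in L^{1}(K,E)$ with $g'(\bar\eta(x),\bar\eta(y))=\mu(g(x,y))-\delta_{1}h(x,y)$, and the computation recalled above shows that $\psi(x,a)=(\bar\eta(x),\mu(a)+h(x))$ is a left-symmetric isomorphism $A\to A'$. The last sentence of the Proposition is then merely a restatement of this equivalence. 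I expect the one genuinely delicate step to be the assertion, in the forward direction, that $\Phi$ must carry $i(E)$ onto $i'(E)$: this is exactly where exactness is indispensable, since for a merely central (non-exact) extension $i(E)$ may be a proper subspace of $C(A)$ that $\Phi$ need not preserve, and then the triangular normal form for $\Phi$ — hence the whole orbit-space description — would break down; once the distinguished ideals are matched, everything reduces to the cocycle bookkeeping already present in the excerpt.
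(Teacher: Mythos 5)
Your proof is correct and follows exactly the route the paper itself sets up: the paper does not prove Proposition \ref{prop3} (it defers to \cite{kim}), but the displayed criterion $\eta^{*}[g']=\mu_{*}[g]$ for $\psi(x,a)=(\eta(x),\mu(a)+h(x))$ to be an isomorphism is precisely the computation you complete. Your identification of exactness ($i(E)=C(A)$, which is intrinsic and hence preserved by any algebra isomorphism) as the step that forces an arbitrary isomorphism into this triangular normal form is the right key point and is handled correctly.
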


\subsection{Complexification of a real left-symmetric algebra}

Let $A$ be a real left-symmetric algebra of dimension $n,$ and let $A^{%
\mathbb{C}}$ denote the real vector space $A\oplus A.$ Let $J:A\oplus
A\rightarrow A\oplus A$ be the linear map on $A\oplus A$ defined by $J\left(
x,y\right) =\left( -y,x\right) .$

For $\alpha +i\beta \in \mathbb{C}$ and $x,x^{\prime },y,y^{\prime }\in A,$
we define%
\begin{equation}
\left( \alpha +i\beta \right) \left( x,y\right) =\left( \alpha x-\beta
y,\alpha y+\beta x\right)  \label{cmplx1}
\end{equation}%
and%
\begin{equation}
\left( x,y\right) \cdot \left( x^{\prime },y^{\prime }\right) =\left(
xx^{\prime }-yy^{\prime },xy^{\prime }+yx^{\prime }\right)  \label{cmplx2}
\end{equation}

We endow the set $A^{\mathbb{C}}$ with the componentwise addition,
multiplication by complex numbers defined by (\ref{cmplx1}), and the product
defined by (\ref{cmplx2}). It is then straightforward to verify that $A^{%
\mathbb{C}},$ when endowed with the product defined by (\ref{cmplx2}),
becomes a complex left-symmetric algebra called \textsl{the complexification 
}of $A.$

The left-symmetric algebra $A$ can be identified with the set of elements in 
$A^{\mathbb{C}}$ of the form $\left( x,0\right) ,$ where $x\in A.$ If $%
e_{1},\ldots ,e_{n}$ is a basis of $A,$ then the elements $\left(
e_{1},0\right) ,\ldots ,\left( e_{n},0\right) $ form a basis of the complex
vector space $A^{\mathbb{C}}.$ It follows that $\dim _{\mathbb{C}}\left( A^{%
\mathbb{C}}\right) =\dim _{\mathbb{R}}\left( A\right) .$

Since $A^{\mathbb{C}}$ is a left-symmetric algebra, we know that the
commutator%
\begin{equation*}
\left[ \left( x,y\right) ,\left( x^{\prime },y^{\prime }\right) \right]
=\left( x,y\right) \cdot \left( x^{\prime },y^{\prime }\right) -\left(
x^{\prime },y^{\prime }\right) \cdot \left( x,y\right)
\end{equation*}%
defines a Lie algebra $\mathcal{G}^{\mathbb{C}}$ on $A^{\mathbb{C}}.$

\medskip

Computing this commutator%
\begin{eqnarray*}
\left[ \left( x,y\right) ,\left( x^{\prime },y^{\prime }\right) \right]
&=&\left( x,y\right) \cdot \left( x^{\prime },y^{\prime }\right) -\left(
x^{\prime },y^{\prime }\right) \cdot \left( x,y\right) \\
&=&\left( \left[ x,x^{\prime }\right] -\left[ y,y^{\prime }\right] ,\left[
x,y^{\prime }\right] +\left[ y,x^{\prime }\right] \right) ,
\end{eqnarray*}%
we get the following

\begin{lemma}
\label{cmplxlemma}The complex Lie algebra $\mathcal{G}^{\mathbb{C}}$
associated to the complex left-symmetric algebra $A^{\mathbb{C}}$ is
isomorphic to the complexification of the Lie algebra $\mathcal{G}$
associated to the left-symmetric algebra $A.$
\end{lemma}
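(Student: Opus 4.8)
The plan is to exhibit an explicit isomorphism by observing that both complex Lie algebras in the statement have the same underlying complex vector space and then checking that their brackets coincide; the identity map will do the job.

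First I would recall the standard model of $\mathcal{G}^{\mathbb{C}}$, the complexification of the Lie algebra $\mathcal{G}$. By definition it is the real vector space $\mathcal{G}\oplus \mathcal{G}$ equipped with the complex scalar multiplication of formula (\ref{cmplx1}) (so that multiplication by $i$ is the operator $J$), together with the unique $\mathbb{C}$-bilinear extension of the bracket of $\mathcal{G}$. Writing elements in the form $x+iy$, this extension is forced to be
\begin{equation*}
[x+iy,\,x'+iy'] = \big([x,x']-[y,y']\big) + i\big([x,y']+[y,x']\big),
\end{equation*}
that is, in the coordinates $(x,y)$ on $\mathcal{G}\oplus \mathcal{G}$,
\begin{equation*}
[(x,y),(x',y')]_{\mathcal{G}^{\mathbb{C}}} = \big([x,x']-[y,y'],\ [x,y']+[y,x']\big).
\end{equation*}

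Next I would note that, as a set with addition and with complex scalar multiplication, $A^{\mathbb{C}}$ is by construction exactly $A\oplus A$ endowed with the operations (\ref{cmplx1}); since $A$ and $\mathcal{G}$ share the same underlying real vector space, $A^{\mathbb{C}}$ and $\mathcal{G}^{\mathbb{C}}$ have literally the same underlying complex vector space. Thus only the brackets need to be compared. But the commutator on $A^{\mathbb{C}}$ has already been computed in the lines preceding the statement to be
\begin{equation*}
[(x,y),(x',y')] = \big([x,x']-[y,y'],\ [x,y']+[y,x']\big),
\end{equation*}
which is precisely the bracket of $\mathcal{G}^{\mathbb{C}}$ displayed above. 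Hence the identity map $A^{\mathbb{C}}\to \mathcal{G}^{\mathbb{C}}$ is a $\mathbb{C}$-linear Lie algebra isomorphism, which proves the lemma.

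There is no real obstacle here; the only point deserving a word of care is that the bracket read off from (\ref{cmplx2}) genuinely agrees with the standard complexification formula, and in particular that it is $\mathbb{C}$-bilinear with respect to (\ref{cmplx1}). This is immediate from the displayed computation, but if one prefers one can instead verify $\mathbb{C}$-bilinearity of this bracket directly and then invoke the uniqueness of the $\mathbb{C}$-bilinear extension of $[\cdot,\cdot]_{\mathcal{G}}$ to identify it with the bracket of $\mathcal{G}^{\mathbb{C}}$.
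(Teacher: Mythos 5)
Your proof is correct and follows the paper's own argument: the paper's proof of this lemma is precisely the displayed computation of the commutator $\left[ \left( x,y\right) ,\left( x^{\prime },y^{\prime }\right) \right] =\left( \left[ x,x^{\prime }\right] -\left[ y,y^{\prime }\right] ,\left[ x,y^{\prime }\right] +\left[ y,x^{\prime }\right] \right)$ immediately preceding the statement, identified with the standard complexified bracket. You have merely made explicit the (correct) observation that the identity map then serves as the isomorphism.
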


Therefore, if $e_{1},\ldots ,e_{n}$ is a basis of $A,$ then the elements $%
\left( e_{1},0\right) ,\ldots ,\left( e_{n},0\right) $ form a basis of $%
\mathcal{G}^{\mathbb{C}},$ and the structural constants of $\mathcal{G}^{%
\mathbb{C}}$ are real since they coincide with the structural constants of $%
\mathcal{G}$ in the basis $e_{1},\ldots ,e_{n}.$

\section{Left-symmetric structures on the oscillator algebra}

\bigskip Recall that the Heisenberg group $H_{3}$ is the 3-dimensional Lie
group diffeomorphic to $\mathbb{R\times C}$ with the group law 
\begin{equation*}
(v_{1},z_{1})\cdot (v_{2},z_{2})=(v_{1}+v_{2}+\frac{1}{2}\func{Im}(\overline{%
z_{1}}z_{2}),z_{1}+z_{2}),
\end{equation*}%
for all $v_{1},v_{2}\in \mathbb{R}$ and $z_{1},z_{2}\in \mathbb{C}$.

Let $\lambda >0,$ and let $G=\mathbb{Rn}H_{3}$ be equipped with the group
law 
\begin{equation*}
(t_{1},v_{1},z_{1})\cdot (t_{2},v_{2},z_{2})=(t_{1}+t_{2},v_{1}+v_{2}+\frac{1%
}{2}\func{Im}(\overline{z_{1}}z_{2}e^{i\lambda
t_{1}}),z_{1}+z_{2}e^{i\lambda t_{1}}),
\end{equation*}%
for all $t_{1},t_{2}\in \mathbb{R}$ and $(v_{1},z_{1}),(v_{2},z_{2})\in
H_{3}.$ This is a 4-dimensional Lie group with Lie algebra $\mathcal{G}$
having a basis $\{e_{1},e_{2},e_{3},e_{4}\}$ such that 
\begin{equation*}
\lbrack e_{1},e_{2}]=e_{3},\ [e_{4},e_{1}]=\lambda e_{2},\ \left[ e_{4},e_{2}%
\right] =-\lambda e_{1},
\end{equation*}%
and all the other brackets are zero.

It follows that the derived series is given by 
\begin{equation*}
\mathcal{D}^{1}\mathcal{G}=[\mathcal{G},\mathcal{G}]=span\{e_{1},e_{2},e_{3}%
\},\ \mathcal{D}^{2}\mathcal{G}=span\{e_{3}\},\ \mathcal{D}^{3}\mathcal{G}%
=\{0\},
\end{equation*}%
and therefore $\mathcal{G}$ is a (non-nilpotent) 3-step solvable Lie algebra.

\medskip

When $\lambda =1,$ $G$ is known as the \emph{oscillator group. }We shall
denote it by $O_{4}$, and we shall denote its Lie algebra by $\mathcal{O}%
_{4} $ and call it the \emph{oscillator algebra.}

\bigskip

Let $A_{4}$ be a complete real left-symmetric algebra whose associated Lie
algebra is $\mathcal{O}_{4}.$ We begin by proving the following proposition
which will be crucial to the classification of complete left-symmetric
structures on $\mathcal{O}_{4}.$

\begin{proposition}
$A_{4}$ is not simple (i.e., $A_{4}$ contains a proper two-sided ideal).
\end{proposition}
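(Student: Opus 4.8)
The plan is to argue by contradiction: suppose $A_4$ is simple. The first step is to pass to the complexification $A_4^{\mathbb C}$, using Lemma \ref{cmplxlemma}, whose associated Lie algebra is $\mathcal O_4^{\mathbb C}$, the complexification of the oscillator algebra. Since simplicity of $A_4$ should force $A_4^{\mathbb C}$ to have no proper complex two-sided ideals (or, failing that, force strong restrictions on how ideals can sit), I would invoke the classification results of \cite{burde} and \cite{kbm} on complex simple (or low-dimensional complete) left-symmetric algebras to locate $A_4^{\mathbb C}$ — or rather to show it cannot occur — in that list. The key point is that the classification in those references pins down which Lie algebras can arise as the associated Lie algebra of a simple complete left-symmetric algebra, and I expect $\mathcal O_4^{\mathbb C}$ (equivalently $\mathcal O_4$ itself) not to appear among them.

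Concretely, I would first extract structural information from the oscillator algebra that an ideal of $A_4$ must respect. The Lie algebra $\mathcal O_4$ has a one-dimensional center $Z(\mathcal O_4) = \operatorname{span}\{e_3\} \cong \mathbb R$ and a commutator ideal $[\mathcal O_4,\mathcal O_4] = \operatorname{span}\{e_1,e_2,e_3\} \cong \mathcal H_3$, the Heisenberg algebra. For the left-symmetric product, completeness (nilpotence of all $R_x$) gives that $R_x$ has no nonzero fixed points, which is exactly what is needed to run the standard Auslander-type/fixed-point arguments showing that the translational ideal $T(A_4)$ is nonzero; indeed, since $A_4$ is complete, the right-multiplication operators are simultaneously "triangularizable" in a suitable sense, and a nonzero common kernel of the $L_x$ restricted to an appropriate invariant subspace gives a nonzero element of $T(A_4)$. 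Because $T(A_4)$ is always a two-sided ideal by \eqref{T(A)}, the only way to avoid a proper ideal is $T(A_4) = A_4$, i.e. $L_x = 0$ for all $x$; but then the left-symmetric product is identically zero and $A_4$ is the trivial (abelian) algebra, whose associated Lie algebra is abelian, contradicting $[\mathcal O_4,\mathcal O_4]\neq 0$.

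Thus the heart of the argument is establishing $T(A_4)\neq 0$. I would do this via the complexification, since over $\mathbb C$ one can use that $R_x^{\mathbb C}$ is nilpotent and hence $\bigcap_x \ker R_x^{\mathbb C}$ can be shown nonzero by an induction on dimension using the fact that a family of commuting-up-to-the-structure nilpotent operators on a nonzero space has a common nonzero kernel in the relevant generalized-eigenspace, together with the left-symmetric identity \eqref{eq2} which controls how $L$ and $R$ interact; alternatively one can quote directly from \cite{burde} or \cite{kbm} the statement that a complete left-symmetric algebra of positive dimension has $T(A)\neq 0$, or that a nonabelian complete left-symmetric algebra is never simple. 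Either way, descending back from $A_4^{\mathbb C}$ to $A_4$ (the ideal $T(A_4^{\mathbb C})$ is defined over $\mathbb R$ since the structure constants are real by Lemma \ref{cmplxlemma}, so $T(A_4^{\mathbb C}) = T(A_4)^{\mathbb C}$) yields $T(A_4)\neq 0$, and then either $T(A_4)$ is a proper ideal, or $T(A_4)=A_4$ forces the product to vanish, contradicting that the associated Lie algebra is the non-abelian $\mathcal O_4$.

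The main obstacle I anticipate is the clean nonvanishing of $T(A_4)$: one must be careful that the family $\{R_x\}$, though all individually nilpotent, need not commute, so the "common kernel" claim requires the left-symmetric identity as an essential ingredient rather than pure linear algebra. This is precisely the place where leaning on \cite{burde} and \cite{kbm} — as the authors announce they will — is cleanest: those papers contain the needed structural facts about complete left-symmetric algebras (existence of a nonzero two-sided ideal, or the non-simplicity of nonabelian complete ones), and combined with the elementary observation that $\mathcal O_4$ is non-abelian, the proposition follows. A secondary, more bookkeeping, issue is identifying exactly which ideal we land in; the statement of the proposition only asserts existence of \emph{some} proper ideal, so for this proposition it suffices to produce $T(A_4)$, and the finer dichotomy ($Z(\mathcal O_4)$ versus $[\mathcal O_4,\mathcal O_4]$) announced in the introduction can be deferred to the subsequent analysis.
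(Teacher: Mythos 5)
Your opening paragraph does point at the paper's actual argument (assume $A_4$ simple, complexify, compare against the classification of complex simple left-symmetric algebras in \cite{burde}), but you then declare that ``the heart of the argument is establishing $T(A_4)\neq 0$,'' and that step rests on purported general facts which are false. First, a complete left-symmetric algebra of positive dimension need not satisfy $T(A)\neq 0$: the paper's own Lemma \ref{lemma6} with $\varepsilon\neq 0$ is a complete left-symmetric structure on $\mathcal{E}(2)$ in which $L_x(e_2)=ae_3+b\varepsilon e_1$ and $L_x(e_3)=-ae_2+c\varepsilon e_1$ for $x=ae_1+be_2+ce_3$, so $L_x\neq 0$ for every $x\neq 0$ and $T=0$. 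Second, a nonabelian complete left-symmetric algebra can perfectly well be simple: the algebra $B_4$ of Burde's Proposition 5.1, quoted in the paper's proof, is exactly a simple \emph{and} complete four-dimensional complex left-symmetric algebra. So neither statement can be ``quoted directly from \cite{burde} or \cite{kbm},'' and your fallback common-kernel argument fails for the reason you yourself flag: $T(A)=\bigcap_y\ker R_y$, and a family of individually nilpotent operators that does not form a Lie algebra of nilpotent operators (so Engel's theorem is unavailable) need not have a common nonzero kernel; the same $\mathcal{E}(2)$ example exhibits this concretely, with $\ker R_{e_2}\cap\ker R_{e_3}=\mathbb{R}e_3\cap\mathbb{R}e_2=0$.

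Consequently the proposal proves nothing as written; the real work is the Lie-algebra comparison you only gesture at. The paper's proof is: if $A_4^{\mathbb{C}}$ is simple then, being also complete, it must be Burde's $B_4$, whose associated Lie algebra has trivial center and hence cannot be $\mathcal{O}_4^{\mathbb{C}}$ (which has one-dimensional center), contradicting Lemma \ref{cmplxlemma}. You also omit a case that must be handled: real simplicity of $A_4$ does not force $A_4^{\mathbb{C}}$ to be simple --- by Lemma 2.10 of \cite{kbm} it may instead be a direct sum of two simple ideals of dimension two, and that case is excluded not by Lie-theoretic considerations but by completeness (Lemma \ref{completelemma}), since the unique two-dimensional complex simple left-symmetric algebra $B_2$ is not complete. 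Dropping the $T(A_4)$ detour and supplying these two case analyses is what turns your first paragraph into a proof.
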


\begin{proof}
Assume to the contrary that $A_{4}$ is simple, and let $A_{4}^{\mathbb{C}}$
be its complexification. By \cite{kbm}, Lemma 2.10, it follows that $A_{4}^{%
\mathbb{C}}$ is either simple or a direct sum of two simple ideals having
the same dimension. If $A_{4}^{\mathbb{C}}$ is simple, then we can apply
Proposition 5.1 in \cite{burde} to deduce that, being simple and complete, $%
A_{4}^{\mathbb{C}}$ is necessarily isomorphic to the complex left-symmetric
algebra $B_{4}$ having a basis $\left\{ e_{1},e_{2},e_{3},e_{4}\right\} $
such that 
\begin{eqnarray*}
e_{1}\cdot e_{2} &=&e_{2}\cdot e_{1}=e_{4},~e_{2}\cdot e_{3}=2e_{1},~ \\
e_{3}\cdot e_{2} &=&e_{4}\cdot e_{1}=e_{1},~e_{4}\cdot
e_{2}=-e_{2},~e_{4}\cdot e_{3}=2e_{3},
\end{eqnarray*}%
and all other products are zero. It follows that the Lie algebra $\mathcal{G}%
_{4}$ associated to $B_{4}$ admits a basis $\left\{
e_{1},e_{2},e_{3},e_{4}\right\} $ such that%
\begin{equation*}
\left[ e_{2},e_{3}\right] =\left[ e_{4},e_{1}\right] =e_{1},~\left[
e_{2},e_{4}\right] =e_{2},~\left[ e_{3},e_{4}\right] =-2e_{3}.
\end{equation*}

This leads to a contradiction since, according to Lemma \ref{cmplxlemma}, $%
\mathcal{G}_{4}$ should be isomorphic to the complexification of the Lie
algebra $\mathcal{O}_{4},$ but this is obviously not the case. This
contradiction shows that $A_{4}^{\mathbb{C}}$ cannot be simple.

If $A_{4}^{\mathbb{C}}$ is a direct sum of two simple ideals having the same
dimension, say $A_{4}^{\mathbb{C}}=A_{1}\oplus A_{2},$ it follows that $\dim
A_{1}=\dim A_{2}=\frac{1}{2}\dim A_{4}^{\mathbb{C}}=2.$ In this case, by
Corollary 4.1 in \cite{burde}, $A_{1}$ and $A_{2}$ are both isomorphic to
the unique two-dimensional complex simple left-symmetric algebra having a
basis $B_{2}=\left\langle e_{1},e_{2}:e_{1}\cdot e_{1}=2e_{1},~e_{1}\cdot
e_{2}=e_{2},~e_{2}\cdot e_{2}=e_{1}\right\rangle .$ This is also a
contradiction, because $A_{1}$ and $A_{2}$ are complete however $B_{2}$ is
not complete. This contradiction shows that $A_{4}^{\mathbb{C}}$ cannot be
direct sum of two simple ideals. We deduce that $A_{4}$ is not simple, and
this completes the proof of the proposition.
\end{proof}

\bigskip

Before returning to the left-symmetric algebra $A_{4},$ we need to state the
following facts without proofs.

\begin{lemma}
\label{lemma0} Let $A$ be a left-symmetric algebra with Lie algebra $%
\mathcal{G}$, and $R$ a two-sided ideal in $A.$ Then, the Lie algebra $%
\mathcal{R}$ associated to $R$ is an ideal in $\mathcal{G}$.
\end{lemma}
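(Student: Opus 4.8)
The plan is to prove Lemma \ref{lemma0} directly from the definitions, checking that the associated Lie subalgebra $\mathcal{R}$ of a two-sided ideal $R$ is stable under bracketing with arbitrary elements of $\mathcal{G}$. Recall that, as a vector space, $\mathcal{R}$ coincides with $R$, and the Lie bracket on $\mathcal{G}$ is the commutator $[x,y]=x\cdot y-y\cdot x$ of the left-symmetric product on $A$. So the entire statement reduces to a one-line verification, and the proof is essentially a bookkeeping exercise.

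First I would recall what it means for $R$ to be a two-sided ideal of the left-symmetric algebra $A$: for every $a\in R$ and every $x\in A$, both $x\cdot a\in R$ and $a\cdot x\in R$. Then, given $a\in\mathcal{R}$ (equivalently $a\in R$) and an arbitrary $x\in\mathcal{G}$ (equivalently $x\in A$), I would simply write
\begin{equation*}
[x,a]=x\cdot a-a\cdot x.
\end{equation*}
By the ideal property, $x\cdot a\in R$ and $a\cdot x\in R$; since $R$ is in particular a linear subspace, the difference $x\cdot a-a\cdot x$ lies in $R=\mathcal{R}$. Hence $[\mathcal{G},\mathcal{R}]\subseteq\mathcal{R}$. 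Because $\mathcal{R}$ is already a linear subspace of $\mathcal{G}$ (it is the same underlying subspace as $R$) and is closed under the bracket (being closed under bracketing even with all of $\mathcal{G}$), it is an ideal of $\mathcal{G}$, which is exactly the assertion.

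There is essentially no obstacle here: the only thing to be careful about is the identification of underlying vector spaces — that the associated Lie algebra $\mathcal{G}$ of $A$ has the same underlying space as $A$, and likewise $\mathcal{R}$ and $R$ — together with the trivial observation that a two-sided ideal of an algebra is in particular a subspace, so it is closed under subtraction. One could optionally remark that the same computation shows more: if $R$ is merely a left ideal or merely a right ideal, the argument already gives $[\mathcal{G},\mathcal{R}]\subseteq R$, but we only need the two-sided case as stated. I would keep the proof to two or three sentences.
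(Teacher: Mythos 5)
Your proof is correct: the paper states Lemma \ref{lemma0} without proof, and your direct verification --- that for $a\in R$ and $x\in A$ both $x\cdot a$ and $a\cdot x$ lie in $R$, hence so does their difference $[x,a]$ --- is exactly the one-line argument the paper implicitly relies on. Nothing is missing.
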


\begin{lemma}
\label{lemma1} The oscillator algebra $\mathcal{O}_{4}$ contains only two
proper ideals which are $Z\left( \mathcal{O}_{4}\right) \cong \mathbb{R}$
and $\left[ \mathcal{O}_{4},\mathcal{O}_{4}\right] \cong \mathcal{H}_{3}.$
\end{lemma}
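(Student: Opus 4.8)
The plan is to determine all ideals of $\mathcal{O}_4$ by a direct computation using the derived series and the center, exploiting the rigidity imposed by the oscillator bracket. First I would record that the commutator ideal is $[\mathcal{O}_4,\mathcal{O}_4]=\mathrm{span}\{e_1,e_2,e_3\}\cong\mathcal{H}_3$ (the Heisenberg algebra, since $[e_1,e_2]=e_3$ and $e_3$ is central there) and that the center is $Z(\mathcal{O}_4)=\mathbb{R}e_3$; both are manifestly ideals, and both are proper and nonzero. So the content of the lemma is that there are no others.

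Next I would take an arbitrary nonzero proper ideal $\mathfrak{a}\subseteq\mathcal{O}_4$ and analyze it by cases on $\dim\mathfrak{a}\in\{1,2,3\}$. For $\dim\mathfrak{a}=1$, writing $\mathfrak{a}=\mathbb{R}v$ with $v=\sum a_ie_i$, the ideal condition forces $[e_4,v]\in\mathbb{R}v$ and $[e_1,v]\in\mathbb{R}v$; since $[e_4,e_1]=e_2$, $[e_4,e_2]=-e_1$, $[e_1,e_2]=e_3$, one computes $[e_4,v]=a_1e_2-a_2e_1+$ (terms killing $e_3,e_4$ directions), and collinearity with $v$ together with $[e_1,v]=a_2e_3\in\mathbb{R}v$ quickly forces $a_1=a_2=a_4=0$, i.e. $\mathfrak{a}=\mathbb{R}e_3=Z(\mathcal{O}_4)$. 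For $\dim\mathfrak{a}=3$, I would use that $\mathfrak{a}$ is then a codimension-one ideal, hence contains $[\mathcal{O}_4,\mathcal{O}_4]=\mathrm{span}\{e_1,e_2,e_3\}$; but that subspace is already $3$-dimensional, so $\mathfrak{a}=[\mathcal{O}_4,\mathcal{O}_4]\cong\mathcal{H}_3$.

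The main obstacle is the case $\dim\mathfrak{a}=2$, where I must show no such ideal exists. Here I would argue that $\mathfrak{a}$ being an ideal implies $[\mathcal{O}_4,\mathfrak{a}]\subseteq\mathfrak{a}$, and in particular $\mathfrak{a}$ is $\mathrm{ad}(e_4)$-invariant. Since $\mathrm{ad}(e_4)$ acts on $\mathrm{span}\{e_1,e_2\}$ as a rotation-type operator (eigenvalues $\pm i$ over $\mathbb{C}$) and annihilates $e_3,e_4$, any $\mathrm{ad}(e_4)$-invariant $2$-plane either meets $\mathrm{span}\{e_1,e_2\}$ appropriately or is spanned by vectors in $\mathrm{span}\{e_3,e_4\}\oplus(\text{invariant pieces})$; a short eigenvalue bookkeeping shows the only $\mathrm{ad}(e_4)$-invariant subspaces are spanned by subsets among $\mathrm{span}\{e_1,e_2\}$, $\mathbb{R}e_3$, $\mathbb{R}e_4$, so a $2$-dimensional invariant $\mathfrak{a}$ must be $\mathrm{span}\{e_1,e_2\}$, $\mathbb{R}e_3\oplus\mathbb{R}e_4$, or $\mathbb{R}e_3\oplus\mathbb{R}v$ with $v\in\mathrm{span}\{e_1,e_2,e_4\}$. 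I would then test the remaining bracket relations: $\mathrm{span}\{e_1,e_2\}$ fails because $[e_1,e_2]=e_3\notin\mathfrak{a}$; $\mathbb{R}e_3\oplus\mathbb{R}e_4$ fails because $[e_4,e_1]=e_2\notin\mathfrak{a}$; and the mixed case fails similarly once one brackets with $e_1$ or $e_2$. This disposes of $\dim\mathfrak{a}=2$ and completes the classification, yielding exactly the two ideals $Z(\mathcal{O}_4)\cong\mathbb{R}$ and $[\mathcal{O}_4,\mathcal{O}_4]\cong\mathcal{H}_3$.
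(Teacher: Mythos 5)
Your proposal is correct, and it is worth noting that the paper itself states this lemma explicitly \emph{without} proof, so you are supplying an argument the author omits rather than deviating from one. Your strategy (compute $Z(\mathcal{O}_4)=\mathbb{R}e_3$ and $[\mathcal{O}_4,\mathcal{O}_4]=\mathrm{span}\{e_1,e_2,e_3\}\cong\mathcal{H}_3$, then run over $\dim\mathfrak{a}\in\{1,2,3\}$) is the natural one: the codimension-one case follows from abelianness of the one-dimensional quotient, and the two-dimensional case is killed by $\mathrm{ad}(e_4)$-invariance together with $[e_1,e_2]=e_3$. Two small points to tidy up. First, in the one-dimensional case you write $[e_1,v]=a_2e_3$, but in fact $[e_1,v]=a_2e_3-a_4e_2$; the cleaner order of deductions is to use $[e_4,v]=a_1e_2-a_2e_1\in\mathbb{R}v$ first (the rotation has no real eigenvectors, forcing $a_1=a_2=0$), and only then use $[e_1,v]=-a_4e_2\in\mathbb{R}v$ to get $a_4=0$. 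Second, your list of $\mathrm{ad}(e_4)$-invariant $2$-planes is over-inclusive: since $\mathrm{ad}(e_4)$ is semisimple with primary components $\mathrm{span}\{e_1,e_2\}$ (no real eigenvalues) and $\ker\mathrm{ad}(e_4)=\mathrm{span}\{e_3,e_4\}$, the only invariant $2$-planes are $\mathrm{span}\{e_1,e_2\}$ and $\mathrm{span}\{e_3,e_4\}$, and both fail the ideal test exactly as you say ($e_3\notin\mathrm{span}\{e_1,e_2\}$ and $[e_4,e_1]=e_2\notin\mathrm{span}\{e_3,e_4\}$). Neither issue affects the validity of the conclusion.
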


By the above proposition, $A_{4}$ is not simple and hence it has a proper
two-sided ideal $I,$ so we get a short exact sequence of complete
left-symmetric algebras 
\begin{equation}
0\rightarrow I\overset{i}{\rightarrow }A_{4}\overset{\pi }{\rightarrow }%
J\rightarrow 0.  \label{seq1.5}
\end{equation}

In fact, the completeness of $I$ and $J$ comes from that of $A_{4}$
according to Lemma \ref{completelemma}. If $\mathcal{I}$ is the Lie
subalgebra associated to $I$ then, by Lemma \ref{lemma0}, $\mathcal{I}$ is
an ideal in $\mathcal{O}_{4}$. From Lemma \ref{lemma1}, it follows that
there are two cases to consider according to whether $\mathcal{I}$ is
isomorphic to $\mathcal{H}_{3}$ or $\mathbb{R}$.

\bigskip

Next, we shall focus on the case where\textbf{\ }$\mathcal{I}$\textbf{\ }is
isomorphic to $\mathcal{H}_{3}\cong \left[ \mathcal{O}_{4},\mathcal{O}_{4}%
\right] .$ In this case,$\ $the short exact sequence (\ref{seq1.5}) becomes 
\begin{equation}
0\rightarrow I_{3}\overset{i}{\rightarrow }A_{4}\overset{\pi }{\rightarrow }%
I_{0}\rightarrow 0,  \label{seq2}
\end{equation}%
where $I_{3}$ is a complete $3$-dimensional left-symmetric algebra whose
underlying Lie algebra is $\mathcal{H}_{3},$ and $I_{0}$ is the trivial
one-dimensional real left-symmetric algebra $\left\{ e_{0}:e_{0}\cdot
e_{0}=0\right\} .$

It is not hard to prove the following proposition (compare \cite{friedgold},
Theorem 3.5).

\begin{proposition}
\label{prop1}Up to left-symmetric isomorphism, the complete left-symmetric
structures on the Heisenberg algebra $\mathcal{H}_{3}$ are classified as
follows: There is a basis $\left\{ e_{1},e_{2},e_{3}\right\} $ of $\mathcal{H%
}_{3}$ relative to which the left-symmetric product is given by one of the
following classes:

\begin{description}
\item[(i)] $e_{1}\cdot e_{1}=pe_{3},$ $e_{2}\cdot e_{2}=qe_{3},$ $e_{1}\cdot
e_{2}=\frac{1}{2}e_{3},$ $e_{2}\cdot e_{1}=-\frac{1}{2}e_{3},$ where $p,q\in 
\mathbb{R}.$

\item[(ii)] $e_{1}\cdot e_{2}=me_{3},$ $e_{2}\cdot e_{1}=\left( m-1\right)
e_{3},$ $e_{2}\cdot e_{2}=e_{1},$ where $m\in \mathbb{R}.$
\end{description}
\end{proposition}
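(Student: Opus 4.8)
The plan is to set up a basis $\{e_1,e_2,e_3\}$ of $\mathcal{H}_3$ with $[e_1,e_2]=e_3$ (so that $e_3$ spans the center $Z(\mathcal{H}_3)=[\mathcal{H}_3,\mathcal{H}_3]$), and then exploit the hypothesis of completeness together with the compatibility condition $e_i\cdot e_j-e_j\cdot e_i=[e_i,e_j]$ to pin down the left-symmetric product. First I would write the general compatible product: $e_1\cdot e_2=a e_3+u$, $e_2\cdot e_1=(a-1)e_3+u$ for suitable $a\in\mathbb{R}$ and $u\in\mathrm{span}\{e_1,e_2\}$ forced to vanish by looking at the $e_1,e_2$-components (since $[\mathcal{H}_3,\mathcal{H}_3]\subseteq\mathbb{R}e_3$, the product $e_i\cdot e_j$ modulo $\mathbb{R}e_3$ is symmetric and must reproduce the abelian quotient structure), the self-products $e_1\cdot e_1=\alpha_1 e_1+\beta_1 e_2+p e_3$, etc., and the products involving $e_3$. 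The key structural input is that $e_3$ is central in the Lie algebra, so $L_{e_3}-R_{e_3}=0$; combined with the left-symmetry identity applied to triples containing $e_3$, one shows $e_3$ acts as a two-sided "almost-ideal," and completeness ($R_x$ nilpotent for all $x$) forces $R_{e_3}=0$ and then $e_3\cdot e_i\in\mathbb{R}e_3$.

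The heart of the argument is a case split driven by the operator $R_{e_2}$ (or the pair of right multiplications on the plane $\mathrm{span}\{e_1,e_2\}$ modulo $e_3$). Completeness says every $R_x$ is nilpotent; I would analyze the induced nilpotent operators on the $2$-dimensional quotient $\mathcal{H}_3/\mathbb{R}e_3$ and distinguish whether the "quadratic part" of the product on the plane — essentially the symmetric bilinear map $(x,y)\mapsto$ the $e_3$-component of $x\cdot y+y\cdot x$ together with whether some self-product like $e_2\cdot e_2$ has a nonzero component along $e_1$ or $e_2$ — is degenerate or not. Normalizing the basis (rescaling $e_1,e_2,e_3$ and applying automorphisms of $\mathcal{H}_3$, which act on the plane by $SL_2$-type changes fixing the center up to scalar), one case collapses to products landing entirely in the center: that gives class (i), with the free parameters $p=$ ($e_3$-component of $e_1\cdot e_1$), $q=$ ($e_3$-component of $e_2\cdot e_2$), and the $\pm\frac12 e_3$ in $e_1\cdot e_2$, $e_2\cdot e_1$ forced by the bracket normalization $[e_1,e_2]=e_3$. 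The other case is where a self-product has a nonzero "horizontal" component; after using an automorphism to arrange $e_2\cdot e_2=e_1$, the left-symmetry identities with triples $(e_2,e_2,e_2)$, $(e_1,e_2,e_2)$, etc., then propagate to force $e_1\cdot e_1=0$, $e_1\cdot e_2=m e_3$, $e_2\cdot e_1=(m-1)e_3$ with a single surviving parameter $m$ — this is class (ii). One should check these two families are genuinely non-isomorphic (e.g. in (i) the subspace $A\cdot A$ lies in the center, whereas in (ii) it does not, so no left-symmetric isomorphism can interchange them).

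Throughout, after writing the ansatz I would substitute into the defining identity $(x,y,z)=(y,x,z)$ for all basis triples; this yields a finite (small) system of scalar equations in the structure constants, and I would solve it, at each stage using the remaining freedom in the choice of compatible basis to eliminate parameters. The main obstacle I anticipate is bookkeeping: ensuring that the normalizations used to reach (i) versus (ii) are exhaustive and non-overlapping, i.e. that every solution of the identity-plus-completeness system is, after an allowed change of basis, exactly one of the listed forms, and that within each family no further identifications are possible beyond those already made (e.g. in (i) no sign or scaling collapses $(p,q)$ further given the fixed bracket normalization). Verifying completeness of the two listed families is immediate since all the operators $R_{e_i}$ are then strictly upper triangular in the basis $\{e_1,e_2,e_3\}$ (the products push everything toward $e_3$ or toward $e_1$ then $e_3$), so Lemma \ref{completelemma} is not even needed here — one checks nilpotency of $R_x$ directly.
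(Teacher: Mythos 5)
The paper does not actually prove Proposition \ref{prop1}; it states it with the remark ``it is not hard to prove'' and refers to Theorem 3.5 of \cite{friedgold}. So your proposal must be judged on its own. Its overall architecture is reasonable and matches what a real proof would look like: fix $[e_1,e_2]=e_3$ with $e_3$ spanning $Z(\mathcal{H}_3)=[\mathcal{H}_3,\mathcal{H}_3]$, show $\mathbb{R}e_3$ sits in the center of the left-symmetric algebra, reduce to the induced commutative product on the $2$-dimensional quotient, and split according to whether that product is trivial (class (i)) or not (class (ii)); your invariant distinguishing the two families ($A\cdot A$ contained in the center or not) is correct.

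However, there are concrete gaps. First, your justification for dropping the horizontal component $u$ of $e_1\cdot e_2$ --- that the product modulo $\mathbb{R}e_3$ ``must reproduce the abelian quotient structure'' --- is wrong: the induced product on the abelian quotient is a commutative, associative, nilpotent product that need not vanish, and class (ii) itself is the counterexample ($\bar{e}_2\cdot\bar{e}_2=\bar{e}_1\neq 0$ in the quotient). Moreover, in a generic basis compatible with $[e_1,e_2]=e_3$ the product $e_1\cdot e_2$ \emph{does} acquire horizontal components (e.g.\ apply the change of basis $e_1'=e_2$, $e_2'=-e_1+e_2$ to class (ii)); arranging $e_1\cdot e_2\in\mathbb{R}e_3$ is part of the normalization you are supposed to prove, not a free starting assumption. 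Second, the assertion that completeness forces $R_{e_3}=0$ (and hence $L_{e_3}=0$, so that $\mathbb{R}e_3$ is a two-sided ideal acting trivially) is stated without argument; it is true but requires combining $L_{e_3}=R_{e_3}$ with the identities $[L_x,L_y]=L_{[x,y]}$ and $R_{x\cdot y}=R_yR_x+[L_x,R_y]$ and the nilpotency of $R_{e_3}$, and it is one of the two or three places where the actual content of the proposition lives. Third, the decisive computations --- that the left-symmetry identities ``propagate to force'' the stated normal forms and that the case split is exhaustive --- are announced rather than performed. As it stands the proposal is a plausible plan with one incorrect justification at a load-bearing step, not a proof.
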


\smallskip

\begin{remark}
It is noticeable that the left-symmetric products on $\mathcal{H}_{3}$
belonging to class (i) in Proposition \ref{prop1} are obtained by central
extensions (in the sense fixed in Subsection 3.1) of $\mathbb{R}^{2}$
endowed with some complete left-symmetric structure by $I_{0}.$ However, the
left-symmetric products on $A_{3}$ belonging to class (ii) are obtained by
central extensions of the non-abelian two-dimensional Lie algebra $\mathcal{G%
}_{2}$ endowed with its unique complete left-symmetric structure by $I_{0}.$
\end{remark}

Now we can return to the short exact sequence (\ref{seq2}). First, let $%
\sigma :I_{0}\rightarrow A_{4}$ be a section, and set $\sigma \left(
e_{0}\right) =x_{0}\in A_{4},$ and define two linear maps $\lambda ,$ $\rho
\in End\left( I_{3}\right) $ by putting $\lambda \left( y\right) =x_{0}\cdot
y$ and $\rho \left( y\right) =y\cdot x_{0},$ and put $e=x_{0}\cdot x_{0}$
(clearly $e\in I_{3}$).

Let $g:I_{0}\times I_{0}\rightarrow I_{3}$ be the bilinear map defined by $%
g\left( e_{0},e_{0}\right) =e.$ It is obvious, using the notation of
Subsection 3.1, to verify that $\delta _{2}g=0,$ i.e. $g\in Z_{\lambda ,\rho
}^{2}\left( I_{0},I_{3}\right) $.

\medskip

The extended left-symmetric product on $I_{3}\oplus I_{0}$ given by (\ref%
{eq5}) turns out to take the simplified form%
\begin{equation*}
\left( x,ae_{0}\right) \cdot \left( y,be_{0}\right) =\left( x\cdot
y+a\lambda \left( y\right) +b\rho \left( x\right) +abe,0\right) ,
\end{equation*}%
for all $x,y\in I_{3}$ and $a,b\in \mathbb{R}.$

\medskip

The conditions in Theorem \ref{thm1} can be simplified to the following
conditions:

\begin{eqnarray}
\lambda \left( x\cdot y\right) &=&\lambda \left( x\right) \cdot y+x\cdot
\lambda \left( y\right) -\rho \left( x\right) \cdot y  \label{eq6.1} \\
\rho \left( \left[ x,y\right] \right) &=&x\cdot \rho \left( y\right) -y\cdot
\rho \left( x\right)  \label{eq6.2} \\
\left[ \lambda ,\rho \right] +\rho ^{2} &=&R_{e}  \label{eq6.3}
\end{eqnarray}

\medskip

Let $\phi :\mathbb{R}\rightarrow End\left( \mathcal{H}_{3}\right) $ be the
linear map defined by formula (\ref{eq0.5}). As we mentioned in Remark \ref%
{remark1}, $\mathbb{R}$ acts on $\mathcal{H}_{3}$ by derivations, that is, $%
\phi :\mathbb{R}\rightarrow Der\left( \mathcal{H}_{3}\right) .$ In
particular, we deduce in view of (\ref{eq1}) that $\lambda =D+\rho $ for
some derivation $D$ of $\mathcal{H}_{3}.$ The derivations of $\mathcal{H}%
_{3} $ are given by the following lemma, whose proof is straightforward and
is therefore omitted.

\begin{lemma}
\label{lemma2}In a basis $\left\{ e_{1},e_{2},e_{3}\right\} $ of $\mathcal{H}%
_{3}$ satisfying $\left[ e_{1},e_{2}\right] =e_{3}$, a derivation $D$ of $%
\mathcal{H}_{3}$ takes the form%
\begin{equation*}
D=\left( 
\begin{array}{ccc}
a_{1} & b_{1} & 0 \\ 
a_{2} & b_{2} & 0 \\ 
a_{3} & b_{3} & a_{1}+b_{2}%
\end{array}%
\right) .
\end{equation*}
\end{lemma}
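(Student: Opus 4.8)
The plan is to compute directly. A derivation $D$ of $\mathcal{H}_3$ is by definition a linear endomorphism satisfying $D[x,y]=[Dx,y]+[x,Dy]$ for all $x,y$, so it suffices to impose this identity on the basis pairs. Writing $D$ as a $3\times 3$ matrix $D e_j = \sum_i d_{ij} e_i$, the only nonzero bracket is $[e_1,e_2]=e_3$, so the single nontrivial constraint is obtained from $(x,y)=(e_1,e_2)$; all pairs involving $e_3$ give conditions coming from $[e_1,e_3]=[e_2,e_3]=0$.

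First I would evaluate $D[e_1,e_2]=D e_3 = d_{13}e_1 + d_{23}e_2 + d_{33}e_3$. On the other side, $[De_1,e_2]+[e_1,De_2] = [d_{11}e_1+d_{21}e_2+d_{31}e_3,\,e_2] + [e_1,\,d_{12}e_1+d_{22}e_2+d_{32}e_3]$. Using $[e_1,e_2]=e_3$, $[e_2,e_1]=-e_3$ and all other brackets zero, the first bracket contributes $d_{11}e_3$ and the second contributes $d_{22}e_3$, so the right-hand side equals $(d_{11}+d_{22})e_3$. Comparing components with $De_3$ forces $d_{13}=0$, $d_{23}=0$, and $d_{33}=d_{11}+d_{22}$.

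Next I would check the remaining basis pairs. From $0=[e_1,e_3]$: $D[e_1,e_3]=0$ while $[De_1,e_3]+[e_1,De_3] = 0 + [e_1,\,0] = 0$ once we know $De_3$ has no $e_1,e_2$ components; here we need $d_{13}=d_{23}=0$, which we have already derived, so this pair is consistent and imposes nothing new. Similarly $[e_2,e_3]=0$ is automatically satisfied. Hence the only restrictions are $d_{13}=d_{23}=0$ and $d_{33}=d_{11}+d_{22}$, and relabelling $a_1=d_{11}$, $a_2=d_{21}$, $a_3=d_{31}$, $b_1=d_{12}$, $b_2=d_{22}$, $b_3=d_{32}$ gives exactly the stated matrix form. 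Conversely, any matrix of that shape satisfies the derivation identity on all basis pairs by the same computation read backwards, so it is a derivation. There is no real obstacle here; the computation is short and the only thing to be careful about is bookkeeping the column-versus-row convention for the matrix of $D$ and making sure the $e_3$-pairs are checked so that the characterization is an equivalence, not just a necessary condition.
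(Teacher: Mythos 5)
Your computation is correct and complete: the single nontrivial constraint from $[e_1,e_2]=e_3$ forces $d_{13}=d_{23}=0$ and $d_{33}=d_{11}+d_{22}$, the pairs involving the central element $e_3$ add nothing new, and you correctly verify the converse so the characterization is an equivalence. The paper omits the proof as ``straightforward,'' and your argument is exactly the intended direct verification of the derivation identity on basis pairs.
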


\bigskip

On the other hand, observe that $\left( x,ae_{0}\right) \in T\left(
A_{4}\right) $ if and only if $\left( x,ae_{0}\right) \cdot \left(
y,be_{0}\right) =\left( 0,0\right) $ for all $\left( y,be_{0}\right) \in
I_{3}\oplus I_{0}$, or equivalently, $x\cdot y+a\lambda \left( y\right)
+b\rho \left( x\right) +abe=0$ for all $\left( y,be_{0}\right) \in
I_{3}\oplus I_{0}.$ Since $y\ $and $b$ are arbitrary, we conclude that this
is also equivalent to say that $\left( L_{x}\right) _{\mid
_{A_{3}}}=-a\lambda $ and $\rho \left( x\right) =-ae.$ In particular, an
element $x\in I_{3}$ belongs to $T\left( A_{4}\right) $ if and only if $%
\left( L_{x}\right) _{\mid _{I_{3}}}=0$ and $\rho \left( x\right) =0,$ or
equivalently, 
\begin{equation}
I_{3}\cap T\left( A_{4}\right) =T\left( I_{3}\right) \cap \ker \rho .
\label{eq7}
\end{equation}

\bigskip

The following lemma will be crucial for the classification of complete
left-symmetric structures on $\mathcal{O}_{4}.$

\begin{lemma}
\label{lemma3}The center $C\left( A_{4}\right) =T\left( A_{4}\right) \cap
Z\left( \mathcal{O}_{4}\right) $ is non-trivial.
\end{lemma}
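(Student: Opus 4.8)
The goal is to show $C(A_4)=T(A_4)\cap Z(\mathcal{O}_4)\neq 0$. Recall $Z(\mathcal{O}_4)=\mathbb{R}e_3$ is one-dimensional, so I must exhibit a nonzero scalar multiple of $e_3$ lying in $T(A_4)$, i.e.\ produce an element of $A_4$ on which left multiplication vanishes and which lies in the Lie center. The natural candidate is $e_3$ itself (more precisely $(e_3,0)\in I_3\oplus I_0$), since $e_3\in[\mathcal{O}_4,\mathcal{O}_4]=\mathcal{H}_3$, and $e_3$ spans $Z(\mathcal{H}_3)$. By the characterization in equation~(\ref{eq7}), $e_3\in I_3\cap T(A_4)$ precisely when $e_3\in T(I_3)$ (i.e.\ $L_{e_3}=0$ on $I_3$) and $\rho(e_3)=0$. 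So the plan is: first reduce to showing $e_3\in T(I_3)\cap\ker\rho$, then verify each of these two conditions using Proposition~\ref{prop1} and the cocycle/derivation constraints~(\ref{eq6.1})--(\ref{eq6.3}).

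\textbf{Step 1 (the element $e_3$ is left-null in $I_3$).} Inspecting both families in Proposition~\ref{prop1}: in class (i) the products are all combinations of $e_1\cdot e_1,e_2\cdot e_2,e_1\cdot e_2,e_2\cdot e_1$, all landing in $\mathbb{R}e_3$, and $e_3$ appears in no product as a left factor; in class (ii) again $L_{e_3}=0$. Hence in either case $L_{e_3}=0$ on $I_3$, i.e.\ $e_3\in T(I_3)$. (More conceptually: $e_3$ spans $Z(\mathcal{H}_3)$, and for a complete left-symmetric structure on a nilpotent Lie algebra the left multiplications are nilpotent-ish enough that the center sits in $T$; but the direct check from Proposition~\ref{prop1} is cleanest.)

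\textbf{Step 2 (the element $e_3$ lies in $\ker\rho$).} This is where the work is. I need $\rho(e_3)=e_3\cdot x_0=0$. Write $\lambda=D+\rho$ with $D\in\mathrm{Der}(\mathcal{H}_3)$ by the remark preceding Lemma~\ref{lemma2}, and use Lemma~\ref{lemma2} for the shape of $D$. The key input is equation~(\ref{eq6.2}): $\rho([x,y])=x\cdot\rho(y)-y\cdot\rho(x)$. Taking $x=e_1,y=e_2$ gives $\rho(e_3)=\rho([e_1,e_2])=e_1\cdot\rho(e_2)-e_2\cdot\rho(e_1)$. Since right multiplications of a complete algebra are nilpotent and $[\mathcal{H}_3,\mathcal{H}_3]=\mathbb{R}e_3$, the right-hand side lands in $L_{e_1}(\mathbb{R}e_1+\mathbb{R}e_3)+L_{e_2}(\mathbb{R}e_1+\mathbb{R}e_3)$... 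I then push this through: $\rho$ is a linear endomorphism of $\mathcal{H}_3$ whose matrix must be compatible with~(\ref{eq6.1})--(\ref{eq6.3}) and with $\rho$ being a nilpotent operator in the appropriate sense (from completeness of $A_4$, the operator $R_{x_0}=\rho$ on $I_3$ together with its extension is nilpotent). Combining the constraint from~(\ref{eq6.2}) with the explicit products of Proposition~\ref{prop1} in each of the two cases, I expect to force $\rho(e_3)$ to be a multiple of $e_3$ that is also killed by nilpotency, hence $\rho(e_3)=0$. Concretely: $\rho(e_3)\in\mathbb{R}e_3$ (it is the image under $\rho$ of a central element, and one checks $\rho$ preserves $\mathbb{R}e_3$ using~(\ref{eq6.1}) with $x=y=e_1$ or $e_2$), say $\rho(e_3)=\mu e_3$; then nilpotency of $\rho$ forces $\mu=0$.

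\textbf{Step 3 (conclusion).} With $L_{e_3}|_{I_3}=0$ and $\rho(e_3)=0$, equation~(\ref{eq7}) gives $(e_3,0)\in I_3\cap T(A_4)\subseteq T(A_4)$. Since also $e_3\in Z(\mathcal{O}_4)$, we get $(e_3,0)\in T(A_4)\cap Z(\mathcal{O}_4)=C(A_4)$, and $e_3\neq 0$, so $C(A_4)\neq 0$. \textbf{Main obstacle.} The only real obstacle is Step 2: showing $\rho(e_3)=0$ requires carefully combining the compatibility conditions~(\ref{eq6.1})--(\ref{eq6.3}), the explicit normal forms of Proposition~\ref{prop1}, and the nilpotency coming from completeness; the danger is missing a stray nonzero component of $\rho(e_3)$ in the $e_1,e_2$ directions, which must be ruled out by~(\ref{eq6.2}) (its left side $\rho(e_3)$ equals a commutator-type expression living in $\mathbb{R}e_3$ plus controlled terms). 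I would handle the two classes (i) and (ii) of Proposition~\ref{prop1} separately, as the bookkeeping differs slightly.
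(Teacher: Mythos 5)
Your overall strategy is the same as the paper's: reduce, via equation~(\ref{eq7}), to showing $e_{3}\in T\left( I_{3}\right) \cap \ker \rho $, verify $L_{e_{3}}=0$ on $I_{3}$ by inspecting the normal forms of Proposition~\ref{prop1}, and conclude from $Z\left( \mathcal{O}_{4}\right) =\mathbb{R}e_{3}$. Steps 1 and 3 are correct and match the paper. The problem is Step 2, which is where essentially all of the difficulty lives, and your proposal does not actually close it.

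The specific gap is your assertion that $\rho \left( e_{3}\right) \in \mathbb{R}e_{3}$, which you justify only by saying it is ``the image under $\rho $ of a central element'' and that one ``checks $\rho $ preserves $\mathbb{R}e_{3}$ using~(\ref{eq6.1})''. Neither argument works. In class (ii) of Proposition~\ref{prop1} (the product $e_{1}\cdot e_{2}=me_{3}$, $e_{2}\cdot e_{1}=\left( m-1\right) e_{3}$, $e_{2}\cdot e_{2}=e_{1}$), substituting $x=e_{1}$, $y=e_{2}$ into~(\ref{eq6.2}) gives $\rho \left( e_{3}\right) =e_{1}\cdot \rho \left( e_{2}\right) -e_{2}\cdot \rho \left( e_{1}\right) $, and because $e_{2}\cdot e_{2}=e_{1}$ the term $e_{2}\cdot \rho \left( e_{1}\right) $ contributes $\alpha _{2}e_{1}$ whenever $\rho \left( e_{1}\right) $ has an $e_{2}$-component $\alpha _{2}$. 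So a priori $\rho \left( e_{3}\right) =-\alpha _{2}e_{1}+\gamma _{3}e_{3}$ with only the constraint $\left( m-1\right) \alpha _{2}=0$; the stray $e_{1}$-component you flag as a ``danger'' is genuinely present and is \emph{not} ruled out by~(\ref{eq6.2}) or by nilpotency of $\rho $ alone (a nilpotent $\rho $ with $\rho \left( e_{3}\right) =-\alpha _{2}e_{1}$, $\alpha _{2}\neq 0$, is perfectly possible; the paper exhibits exactly such a $\rho $ of the form $e_{1}^{\prime }\mapsto -e_{2}^{\prime }$, $e_{3}^{\prime }\mapsto e_{1}^{\prime }$). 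The paper kills this case by a substantially longer argument: assuming $\alpha _{2}\neq 0$ forces $m=1$, then $e_{3},\rho \left( e_{3}\right) ,\rho ^{2}\left( e_{3}\right) $ form a basis of $I_{3}$, the conditions~(\ref{eq6.1})--(\ref{eq6.3}) pin down $\lambda $ and the full product on $A_{4}$, and one reads off $\dim \left[ A_{4},A_{4}\right] =2$, contradicting $\dim \left[ \mathcal{O}_{4},\mathcal{O}_{4}\right] =3$. Your proposal contains no substitute for this global contradiction; phrases such as ``I expect to force'' and ``I would handle the two classes separately'' defer precisely the step that cannot be obtained from the local identities you cite. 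In class (i) your argument does go through (there $\rho \left( e_{3}\right) =\gamma _{3}e_{3}$ follows directly from~(\ref{eq6.2}), and either nilpotency of $\rho $ or~(\ref{eq6.3}) applied to $e_{3}$ gives $\gamma _{3}=0$), but as written the proof is incomplete for class (ii).
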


\begin{proof}
In view of Proposition \ref{prop1}, we have to consider two cases.

\medskip

\textbf{Case 1.} Assume that there is a basis $\left\{
e_{1},e_{2},e_{3}\right\} $ of $\mathcal{H}_{3}$ relative to which the
left-symmetric product of $I_{3}$ is given by : $e_{1}\cdot e_{1}=pe_{3},$ $%
e_{2}\cdot e_{2}=qe_{3},$ $e_{1}\cdot e_{2}=\frac{1}{2}e_{3},$ $e_{2}\cdot
e_{1}=-\frac{1}{2}e_{3},$ where $p,q\in \mathbb{R}.$

Substituting $x=e_{1}$ and $y=e_{2}$ into (\ref{eq6.2}), we find that the
operator $\rho $ takes the form%
\begin{equation*}
\rho =\left( 
\begin{array}{ccc}
\alpha _{1} & \beta _{1} & 0 \\ 
\alpha _{2} & \beta _{2} & 0 \\ 
\alpha _{3} & \beta _{3} & \gamma _{3}%
\end{array}%
\right) ,
\end{equation*}%
with $\gamma _{3}=p\beta _{1}-q\alpha _{2}+\frac{1}{2}\left( \alpha
_{1}+\beta _{2}\right) .$ Since $\lambda =D+\rho $ for some $D\in \mathcal{H}%
_{3}$, we use Lemma \ref{lemma2} to deduce that 
\begin{equation*}
\lambda =\left( 
\begin{array}{ccc}
\alpha _{1}+a_{1} & \beta _{1}+b_{1} & 0 \\ 
\alpha _{2}+a_{2} & \beta _{2}+b_{2} & 0 \\ 
\alpha _{3}+a_{3} & \beta _{3}+b_{3} & \gamma _{3}+a_{1}+b_{2}%
\end{array}%
\right) .
\end{equation*}

Since $\left( L_{e_{3}}\right) _{\mid _{I_{3}}}=0$ and $e\in I_{3}$, then (%
\ref{eq6.3}) when applied to $e_{3}$ gives%
\begin{equation*}
\gamma _{3}^{2}e_{3}=e_{3}\cdot e=0,
\end{equation*}%
from which we get $\gamma _{3}=0,$ i.e. $\rho \left( e_{3}\right) =0.$ It
follows from (\ref{eq7}) that $e_{3}\in T\left( A_{4}\right) .$ Since $%
Z\left( \mathcal{O}_{4}\right) =\mathbb{R}e_{3},$ we deduce that $C\left(
A_{4}\right) =T\left( A_{4}\right) \cap Z\left( \mathcal{O}_{4}\right) \neq
0,$ as required.

\medskip

\textbf{Case 2.} Assume now that there is a basis $\left\{
e_{1},e_{2},e_{3}\right\} $ of $\mathcal{H}_{3}$ relative to which the
left-symmetric product of $I_{3}$ is given by : $e_{1}\cdot e_{2}=me_{3},$ $%
e_{2}\cdot e_{1}=\left( m-1\right) e_{3},$ $e_{2}\cdot e_{2}=e_{1},$ where $%
m $ is a real number.

Substituting successively $x=e_{1},~y=e_{2}$ and $x=e_{2},~y=e_{3}$ into
equation (\ref{eq6.2}), we find that the operator $\rho $ takes the form%
\begin{equation}
\rho =\left( 
\begin{array}{ccc}
\alpha _{1} & \beta _{1} & -\alpha _{2} \\ 
\alpha _{2} & \beta _{2} & 0 \\ 
\alpha _{3} & \beta _{3} & m\beta _{2}-\left( m-1\right) \alpha _{1}%
\end{array}%
\right) ,  \label{eq8}
\end{equation}%
with~$\left( m-1\right) \alpha _{2}=0.$

We claim that\textbf{\ }$\alpha _{2}=0.$ To prove this, let us assume to the
contrary that $\alpha _{2}\neq 0.$ It follows that $m=1,$ and therefore%
\begin{eqnarray*}
\rho \left( e_{3}\right) &=&-\alpha _{2}e_{1}+\beta _{2}e_{3} \\
\rho ^{2}\left( e_{3}\right) &=&-\alpha _{2}\left( \alpha _{1}+\beta
_{2}\right) e_{1}-\alpha _{2}^{2}e_{2}+\left( \beta _{2}^{2}-\alpha
_{2}\alpha _{3}\right) e_{3}
\end{eqnarray*}

Since $\alpha _{2}\neq 0,$ we deduce that $e_{3},$ $\rho \left( e_{3}\right)
,$ $\rho ^{2}\left( e_{3}\right) $ form a basis of $I_{3}.$ Since $\rho $ is
nilpotent (by completeness of the left-symmetric structure), it follows that 
$\rho ^{3}\left( e_{3}\right) =0.$ In other words, $\rho $ has the form%
\begin{equation*}
\rho =\left( 
\begin{array}{ccc}
0 & 0 & 1 \\ 
-1 & 0 & 0 \\ 
0 & 0 & 0%
\end{array}%
\right) ,
\end{equation*}%
with respect to the basis $e_{1}^{\prime }=-\rho \left( e_{3}\right) ,$ $%
e_{2}^{\prime }=\rho ^{2}\left( e_{3}\right) ,$ $e_{3}^{\prime }=-e_{3}.$

Using the fact that $\alpha _{1}+2\beta _{2}=0$ which follows from the
identity $\rho ^{3}\left( e_{3}\right) =0,$ we see that $e_{1}^{\prime
}\cdot e_{2}^{\prime }=\alpha _{2}^{3}e_{3}^{\prime },$ $e_{2}^{\prime
}\cdot e_{2}^{\prime }=\alpha _{2}^{3}e_{1}^{\prime },$ and all other
products are zero.

For simplicity, assume without loss of generality that $\alpha _{2}=1.$
Since $\lambda =D+\rho $ for some $D\in \mathcal{H}_{3}$, Lemma \ref{lemma2}
tells us that, with respect to the basis $e_{1}^{\prime },e_{2}^{\prime },$ $%
e_{3}^{\prime }$, the operator $\lambda $ takes the form 
\begin{equation*}
\lambda =\left( 
\begin{array}{ccc}
a_{1} & b_{1} & 1 \\ 
a_{2}-1 & b_{2} & 0 \\ 
a_{3} & b_{3} & a_{1}+b_{2}%
\end{array}%
\right) .
\end{equation*}

Applying formula (\ref{eq6.3}) to $e_{3}^{\prime }$ and recalling that $%
e_{3}^{\prime }\cdot e=0$ since $\mathbf{e}\in I_{3},$ we deduce that $%
a_{2}=1$ and $b_{2}=a_{3}=0.$ Then, substituting $x=y=e_{2}^{\prime }$ into
equation (\ref{eq6.1}), we get $a_{1}=b_{1}=0.$ Thus, the form of $\lambda $
reduces to 
\begin{equation*}
\lambda =\left( 
\begin{array}{ccc}
0 & 0 & 1 \\ 
0 & 0 & 0 \\ 
0 & b_{3} & 0%
\end{array}%
\right) .
\end{equation*}

Now, by setting $\mathbf{e}=ae_{1}+be_{2}+ce_{3}$ and applying (\ref{eq6.3})
to $e_{1},$ we get that $b_{3}=-b.$ By using (\ref{eq5a}), we deduce that
the nonzero left-symmetric products are 
\begin{eqnarray*}
e_{1}^{\prime }\cdot e_{2}^{\prime } &=&e_{3}^{\prime },\ \ e_{2}^{\prime
}\cdot e_{2}^{\prime }=e_{1}^{\prime }, \\
e_{1}^{\prime }\cdot e_{4}^{\prime } &=&-e_{2}^{\prime },\ \ e_{4}^{\prime
}\cdot e_{2}^{\prime }=-be_{3}^{\prime } \\
e_{3}^{\prime }\cdot e_{4}^{\prime } &=&e_{4}^{\prime }\cdot e_{3}^{\prime
}=e_{1}^{\prime },\ \ e_{4}^{\prime }\cdot e_{4}^{\prime }=\mathbf{e}.
\end{eqnarray*}

This implies, in particular, that $\dim \left[ \mathcal{O}_{4},\mathcal{O}%
_{4}\right] =\dim \left[ A_{4},A_{4}\right] =2,$ a contradiction. It follows
that $\alpha _{2}=0,$ as desired.\medskip

We now return to (\ref{eq8}). Since $\alpha _{2}=0,$ we have 
\begin{equation*}
\rho =\left( 
\begin{array}{ccc}
\alpha _{1} & \beta _{1} & 0 \\ 
0 & \beta _{2} & 0 \\ 
\alpha _{3} & \beta _{3} & m\beta _{2}-\left( m-1\right) \alpha _{1}%
\end{array}%
\right) ,
\end{equation*}%
and since $\lambda =D+\rho $ for some $D\in \mathcal{H}_{3}$ then, in view
of Lemma \ref{lemma2}, the operator $\lambda $ takes the form 
\begin{equation*}
\lambda =\left( 
\begin{array}{ccc}
\alpha _{1}+a_{1} & \beta _{1}+b_{1} & 0 \\ 
a_{2} & \beta _{2}+b_{2} & 0 \\ 
\alpha _{3}+a_{3} & \beta _{3}+b_{3} & a_{1}+b_{2}+m\beta _{2}-\left(
m-1\right) \alpha _{1}%
\end{array}%
\right) .
\end{equation*}

Once again, by applying (\ref{eq6.3}) to $e_{3}$ and recalling that $%
e_{3}\cdot e=0$ since $\mathbf{e}\in I_{3},$ we deduce that $\left( m\beta
_{2}-\left( m-1\right) \alpha _{1}\right) ^{2}=0,$ thereby showing that $%
\rho \left( e_{3}\right) =0.$ Now, in view of (\ref{eq7}) we get $e_{3}\in
T\left( A_{4}\right) ,$ and since $Z\left( \mathcal{O}_{4}\right) =\mathbb{R}%
e_{3}$ we deduce that $C\left( A_{4}\right) =T\left( A_{4}\right) \cap
Z\left( \mathcal{O}_{4}\right) \neq 0,$ as desired. This completes the proof
of the lemma.
\end{proof}

\section{Classification}

We know from Section 4 that $A_{4}$\ has a proper two-sided ideal $I$\ which
is isomorphic to either the trivial one-dimensional real left-symmetric
algebra $I_{0}=\left\{ e_{0}:e_{0}\cdot e_{0}=0\right\} $ or a $3$%
-dimensional left-symmetric algebra $I_{3}$\ (as described in Proposition %
\ref{prop1}) whose associated Lie algebra is the Heisenberg algebra $%
\mathcal{H}_{3}$.

In case where $I\cong I_{3}$, we know by Lemma \ref{lemma3} that $C\left(
A_{4}\right) \neq \left\{ 0\right\} .$\ Since, in our situation, $\dim
Z\left( \mathcal{O}_{4}\right) =1,$\ it follows that $C\left( A_{4}\right)
\cong I_{0},$ so that we have a central short exact sequence of
left-symmetric algebras of the form%
\begin{equation}
0\rightarrow I_{0}\rightarrow A_{4}\rightarrow I_{3}\rightarrow 0.
\label{seq3}
\end{equation}

In general, one has that the center of a left-symmetric algebra is a part of
the center of the associated Lie algebra, and therefore the following lemma
is proved.

\begin{lemma}
\label{lemma5}The Lie algebra associated to $I_{3}$ is isomorphic to the Lie
algebra $\mathcal{E}\left( 2\right) $ of the group of Euclidean motions of
the plane.
\end{lemma}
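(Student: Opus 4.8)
The plan is to push the central short exact sequence (\ref{seq3}) of left-symmetric algebras down to the level of the associated Lie algebras and then read the quotient off directly from the brackets of $\mathcal{O}_{4}$. The key point, already recalled just before the statement, is that the left-symmetric center $C\left( A_{4}\right) $ is contained in the center $Z\left( \mathcal{O}_{4}\right) $ of the associated Lie algebra.

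First I would identify the copy of $I_{0}$ sitting inside $A_{4}$ in (\ref{seq3}). Since that extension is central, its image lies in $C\left( A_{4}\right) =T\left( A_{4}\right) \cap Z\left( \mathcal{O}_{4}\right) $. By Lemma \ref{lemma3} we have $C\left( A_{4}\right) \neq \left\{ 0\right\} $, and since $\dim Z\left( \mathcal{O}_{4}\right) =1$ this forces $C\left( A_{4}\right) =Z\left( \mathcal{O}_{4}\right) =\mathbb{R}e_{3}$. As the image of $I_{0}$ is one-dimensional and sits inside $C\left( A_{4}\right) $, it coincides with $\mathbb{R}e_{3}$; in particular $\mathbb{R}e_{3}$ is an ideal of $\mathcal{O}_{4}$ (being the center), so the quotient below makes sense.

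Next I would pass to the Lie algebras. The projection $\pi :A_{4}\rightarrow I_{3}$ in (\ref{seq3}) is a morphism of left-symmetric algebras, hence, via the commutator (\ref{eq3}), a surjective morphism of the associated Lie algebras with kernel $\mathbb{R}e_{3}$; therefore the Lie algebra $\mathcal{I}_{3}$ associated to $I_{3}$ satisfies $\mathcal{I}_{3}\cong \mathcal{O}_{4}/\mathbb{R}e_{3}$. Finally I would compute this quotient: writing $\overline{e_{i}}=e_{i}+\mathbb{R}e_{3}$, the relations of $\mathcal{O}_{4}$ descend to $[\overline{e_{1}},\overline{e_{2}}]=0$, $[\overline{e_{4}},\overline{e_{1}}]=\overline{e_{2}}$, $[\overline{e_{4}},\overline{e_{2}}]=-\overline{e_{1}}$, with all remaining brackets zero. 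This is exactly the structure of $\mathcal{E}\left( 2\right) =\mathbb{R}\ltimes \mathbb{R}^{2}$, with $\overline{e_{4}}$ generating the infinitesimal rotations of the abelian plane $\mathrm{span}\{\overline{e_{1}},\overline{e_{2}}\}$, and this identifies $\mathcal{I}_{3}$ with $\mathcal{E}\left( 2\right) $.

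There is no genuine obstacle here: the content is entirely contained in the remark that the left-symmetric center lies in the Lie center. The two points to state with care are that centrality of (\ref{seq3}) together with $\dim Z\left( \mathcal{O}_{4}\right) =1$ pins the kernel down to the full center $\mathbb{R}e_{3}$ (this is where Lemma \ref{lemma3} is used), and that a surjection of left-symmetric algebras is automatically a surjection of associated Lie algebras, so that the quotient Lie algebra is literally $\mathcal{O}_{4}/\mathbb{R}e_{3}$, which one then recognizes as $\mathcal{E}\left( 2\right) $.
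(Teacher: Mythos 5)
Your proposal is correct and follows essentially the same route as the paper, which disposes of this lemma in one sentence by noting that $C\left(A_{4}\right)\subseteq Z\left(\mathcal{O}_{4}\right)$, so that nontriviality of $C\left(A_{4}\right)$ together with $\dim Z\left(\mathcal{O}_{4}\right)=1$ forces the kernel of the extension to be $\mathbb{R}e_{3}$ and the quotient Lie algebra to be $\mathcal{O}_{4}/\mathbb{R}e_{3}\cong\mathcal{E}\left(2\right)$. You have merely written out explicitly the quotient computation and the passage from the left-symmetric surjection to the Lie algebra surjection, which the paper leaves implicit.
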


Recall that $\mathcal{E}\left( 2\right) $ is solvable non-nilpotent and has
a basis$\ \left\{ e_{1},e_{2},e_{3}\right\} $ which satisfies $\left[
e_{1},e_{2}\right] =e_{3}$ and $\left[ e_{1},e_{3}\right] =-e_{2}$.

\bigskip

In the case where $I\cong I_{0}$, we know by Lemma \ref{lemma0} that the
associated Lie algebra is $\mathcal{I}\cong \mathbb{R}.$ Since, by Lemma \ref%
{lemma1}, $\mathcal{O}_{4}$ has only two proper ideals which are $Z\left( 
\mathcal{O}_{4}\right) \cong \mathbb{R}$ and $\left[ \mathcal{O}_{4},%
\mathcal{O}_{4}\right] \cong \mathcal{H}_{3},$ it follows that $\mathcal{I}%
\cong \mathbb{R}$ coincides with the center $Z\left( \mathcal{O}_{4}\right)
. $ We deduce from this that, if $\mathcal{J}$ denotes the Lie algebra of
the left-symmetric algebra $J$ in the short exact sequence (\ref{seq1.5}),
then $\mathcal{J}$ is isomorphic to $\mathcal{E}\left( 2\right) .$
Therefore, we have a short sequence of left-symmetric algebras which looks
like (\ref{seq3}) except that it would not necessarily be central. But, as
we will see a little later, this is necessarily a central extension (i.e., $%
I\cong C\left( A_{4}\right) \cong I_{0}$).

\bigskip

To summarize, each complete left-symmetric structure on $\mathcal{O}_{4}$\
may be obtained by extension of a complete $3$-dimensional left-symmetric
algebra $A_{3}$ whose associated Lie algebra is $\mathcal{E}\left( 2\right) $
by $I_{0}.$

Next, we shall determine all the complete left-symmetric structures on $%
\mathcal{E}\left( 2\right) .$ These are described by the following lemma
that we state without proof (see \cite{friedgold}, Theorem 4.1).

\begin{lemma}
\label{lemma6}Up to left-symmetric isomorphism, any complete left-symmetric
structure on $\mathcal{E}\left( 2\right) $ is isomorphic to the following
one which is given in a basis $\left\{ e_{1},e_{2},e_{3}\right\} $ of $%
\mathcal{E}\left( 2\right) $ by the relations 
\begin{equation*}
e_{1}\cdot e_{2}=e_{3},~e_{1}\cdot e_{3}=-e_{2},~e_{2}\cdot e_{2}=e_{3}\cdot
e_{3}=\varepsilon e_{1}.
\end{equation*}

There are exactly two non-isomorphic conjugacy classes according to whether $%
\varepsilon =0$ or $\varepsilon \neq 0.$
\end{lemma}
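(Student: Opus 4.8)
The plan is to realize any complete left-symmetric structure on $\mathcal{E}(2)$ as a central extension of a $2$-dimensional complete left-symmetric algebra by $I_{0}$, exactly as was done for $\mathcal{H}_{3}$ in Proposition \ref{prop1}, and then exploit the rigidity of the Lie bracket of $\mathcal{E}(2)$ to pin down the structure constants. First I would observe that, by the same argument used in the proof of Lemma \ref{lemma3}, the center $C(A_{3})$ of a complete left-symmetric algebra $A_{3}$ with associated Lie algebra $\mathcal{E}(2)$ is non-trivial: since $\dim Z(\mathcal{E}(2)) = 0$ this cannot be quite the right statement, so instead I would work with the two-sided ideal $T(A_{3}) = \{x : L_x = 0\}$ together with the completeness hypothesis, or alternatively note that $\mathcal{E}(2)$ is not itself a candidate ideal of anything and directly set up the extension $0 \to \mathbb{R} \to A_{3} \to A_{2} \to 0$ where $A_{2}$ is a complete left-symmetric structure on a $2$-dimensional Lie algebra. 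Since $[\mathcal{E}(2),\mathcal{E}(2)] = \mathrm{span}\{e_{2},e_{3}\}$ is abelian of dimension $2$, the quotient $A_{2}$ has abelian associated Lie algebra, and the complete left-symmetric structures on $\mathbb{R}^{2}$ are known and finite in number up to isomorphism.

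Next I would fix a basis $\{e_{1},e_{2},e_{3}\}$ with $[e_{1},e_{2}] = e_{3}$, $[e_{1},e_{3}] = -e_{2}$, write the left and right multiplications $L_{e_i}, R_{e_i}$ as matrices with unknown entries, and impose: (a) the compatibility $[x,y] = xy - yx$, which expresses $L_{e_i} - R_{e_i}$ in terms of $\mathrm{ad}_{e_i}$; (b) the left-symmetric identity (\ref{eq2}) in the form $(x,y,z) = (y,x,z)$ on basis triples; (c) completeness, i.e. nilpotency of every $R_{x}$. Condition (a) together with the specific brackets forces a large number of entries to vanish or coincide — in particular the rotation-like action of $e_1$ on the $(e_2,e_3)$-plane is very restrictive. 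I expect this to reduce the problem to a small system whose solution, after a change of basis within the residual automorphism group of $\mathcal{E}(2)$ (scaling $e_1$ and rotating/reflecting the $(e_2,e_3)$-plane), brings the product to the claimed normal form $e_{1}\cdot e_{2} = e_{3}$, $e_{1}\cdot e_{3} = -e_{2}$, $e_{2}\cdot e_{2} = e_{3}\cdot e_{3} = \varepsilon e_{1}$. The parameter $\varepsilon$ survives only as the dichotomy $\varepsilon = 0$ versus $\varepsilon \neq 0$ because rescaling $e_1 \mapsto \mu e_1$ (and correspondingly $e_2, e_3$) rescales $\varepsilon$ but cannot change its vanishing.

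The main obstacle will be the bookkeeping in step (b)–(c): verifying that the left-symmetric identity on all triples of basis vectors, combined with nilpotency of $R_{e_1}$ (whose eigenvalues are constrained by the rotation action), leaves no further free parameters and no additional non-isomorphic families. In particular one must check that the putative off-diagonal terms $e_{1}\cdot e_{1}$, $e_{2}\cdot e_{3}$, $e_{3}\cdot e_{2}$ are forced to zero and that $e_{2}\cdot e_{2}$ and $e_{3}\cdot e_{3}$ are forced to be equal multiples of $e_1$ (the symmetry coming from the $\mathbb{Z}/2$ residual symmetry exchanging $e_2 \leftrightarrow e_3$ up to sign). Once the algebra is in the displayed form, completeness is immediate since $R_{e_1}$ is the rotation generator composed with the nilpotent part and one checks directly that $R_x$ is nilpotent for all $x$; finally, to see that $\varepsilon = 0$ and $\varepsilon \neq 0$ are genuinely non-isomorphic one computes an invariant such as $\dim\{x : L_x = 0\}$ or the dimension of $[A_3,A_3]$, which differ in the two cases. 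Since the cited reference (\cite{friedgold}, Theorem 4.1) already contains the result, I would present this as a sketch and refer the reader there for the full computation.
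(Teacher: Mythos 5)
The paper gives no proof of this lemma; it is stated as a quotation of Theorem 4.1 of \cite{friedgold}, so there is nothing internal to compare your argument against. Judged on its own terms, your sketch has one structural error and otherwise consists of a plausible but unexecuted computation. The error is in the proposed extension $0\rightarrow \mathbb{R}\rightarrow A_{3}\rightarrow A_{2}\rightarrow 0$: by Lemma \ref{lemma0}, a two-sided ideal of the left-symmetric algebra $A_{3}$ sits over a Lie ideal of $\mathcal{E}\left( 2\right) $, and $\mathcal{E}\left( 2\right) $ has \emph{no} one-dimensional ideal. Indeed, its only proper nonzero ideal is the derived algebra $\left[ \mathcal{E}\left( 2\right) ,\mathcal{E}\left( 2\right) \right] =\mathrm{span}\left\{ e_{2},e_{3}\right\} $, since $ad_{e_{1}}$ acts on that plane as a rotation and so preserves no line, while any ideal containing a vector with nonzero $e_{1}$-component is all of $\mathcal{E}\left( 2\right) .$ You have the exact sequence backwards: the only available decomposition is $0\rightarrow \mathbb{R}^{2}\rightarrow A_{3}\rightarrow I_{0}\rightarrow 0$, with the two-dimensional abelian ideal as kernel and the one-dimensional trivial algebra as quotient (this is the analogue of the paper's sequence (\ref{seq2}), not of (\ref{seq3})). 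Moreover, even this requires first showing that $\mathrm{span}\left\{ e_{2},e_{3}\right\} $ is a two-sided ideal of $A_{3}$, i.e.\ that $A_{3}$ is not simple --- an argument parallel to the complexification step the paper carries out for $A_{4}$, which your sketch omits.

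Your alternative, direct route --- writing the $L_{e_{i}}$ as unknown matrices, imposing compatibility with the bracket, the identity $\left( x,y,z\right) =\left( y,x,z\right) $, nilpotency of all $R_{x}$, and then normalizing by $Aut\left( \mathcal{E}\left( 2\right) \right) $ (which is generated by $e_{1}\mapsto \pm e_{1}+v$ and conformal/anticonformal maps of the $\left( e_{2},e_{3}\right) $-plane) --- is sound in outline and would work, but as written it is a plan rather than a proof: none of the claimed forcings ($e_{1}\cdot e_{1}=0$, $e_{2}\cdot e_{3}=e_{3}\cdot e_{2}=0$, $e_{2}\cdot e_{2}=e_{3}\cdot e_{3}$) is actually derived. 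Two smaller points: the invariant $\dim \left[ A_{3},A_{3}\right] $ does not separate the two classes (the Lie bracket is $\mathcal{E}\left( 2\right) $ in both cases); you want $\dim \left( A_{3}\cdot A_{3}\right) $ or, as you also suggest, $\dim T\left( A_{3}\right) $, which is $2$ for $\varepsilon =0$ and $0$ for $\varepsilon \neq 0.$ Also $R_{e_{1}}=0$ in the normal form; it is $L_{e_{1}}$, not $R_{e_{1}}$, that is the rotation generator, though completeness does follow by a short direct check that every $R_{x}$ is nilpotent.
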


From now on, $A_{3}$ will denote the vector space $\mathcal{E}\left(
2\right) $ endowed with one of the complete left-symmetric structures
described in Lemma \ref{lemma6}. The extended Lie bracket on $\mathcal{E}%
\left( 2\right) \oplus \mathbb{R}$ is given by 
\begin{equation}
\left[ \left( x,a\right) ,\left( y,b\right) \right] =\left( \left[ x,y\right]
,\omega \left( x,y\right) \right) ,  \label{eq10}
\end{equation}%
with $\omega \in Z^{2}\left( \mathcal{E}\left( 2\right) ,\mathbb{R}\right) .$
The extended left-symmetric product on $A_{3}\oplus I_{0}$ is given by 
\begin{equation}
\left( x,ae_{0}\right) \cdot \left( y,be_{0}\right) =\left( x\cdot
y,b\lambda _{x}\left( e_{0}\right) +a\rho _{y}\left( e_{0}\right) +g\left(
x,y\right) \right) ,  \label{eq11}
\end{equation}%
with $\lambda ,$ $\rho :A_{3}\rightarrow End\left( I_{0}\right) $ and $g\in
Z_{\lambda ,\rho }^{2}\left( A_{3},I_{0}\right) .$

\bigskip

As we have noticed in Section 3, $I_{0}$ is an $A_{3}$-bimodule, or
equivalently, the conditions in Theorem \ref{thm1} simplify to the following
conditions:

\begin{description}
\item[(i)] $\lambda _{\left[ x,y\right] }=0.$

\item[(ii)] $\rho _{x\cdot y}=\rho _{y}\circ \rho _{x}.$

\item[(iii)] $g\left( x,y\cdot z\right) -g\left( y,x\cdot z\right) +\lambda
_{x}\left( g\left( y,z\right) \right) -\lambda _{y}\left( g\left( x,z\right)
\right) -g\left( \left[ x,y\right] ,z\right) $

\item $-\rho _{z}\left( g\left( x,y\right) -g\left( y,x\right) \right) =0.$
\end{description}

\smallskip

By using (\ref{eq10}) and (\ref{eq11}), we deduce from%
\begin{equation*}
\left[ \left( x,a\right) ,\left( y,b\right) \right] =\left( x,ae_{0}\right)
\cdot \left( y,be_{0}\right) -\left( y,be_{0}\right) \cdot \left(
x,ae_{0}\right) ,
\end{equation*}%
that 
\begin{equation}
\omega \left( x,y\right) =g\left( x,y\right) -g\left( y,x\right) \ \ \text{%
and }\lambda =\rho .  \label{eq12}
\end{equation}

\bigskip

By applying identity (ii) above to $e_{i}\cdot e_{i},$ $1\leq i\leq 3,$ we
deduce that $\rho =0,$ and a fortiori $\lambda =0.$ In other words, the
extension $A_{4}$ is always central (i.e., $I\cong C\left( A_{4}\right) $
even in the case where $\mathcal{I}\cong \mathbb{R}$).

\smallskip

In fact, we have

\begin{claim}
\label{claim2}The extension $0\rightarrow I_{0}\rightarrow A_{4}\rightarrow
A_{3}\rightarrow 0$ is exact.
\end{claim}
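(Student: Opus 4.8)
The plan is to show that $I_{[g]} = 0$, which by the Remark following the definition of central/exact extensions is equivalent to exactness of the central extension $0 \to I_0 \to A_4 \to A_3 \to 0$. Recall that
\[
I_{[g]} = \{ x \in A_3 : x \cdot y = y \cdot x = 0 \text{ and } g(x,y) = g(y,x) = 0,\ \text{for all } y \in A_3 \},
\]
and that this set is well defined on the cohomology class since the modification by $\delta_1 h$ only adds the term $-h(x\cdot y)$, which vanishes on any $x$ with $x \cdot y = y \cdot x = 0$ for all $y$. So the first step is purely a computation inside $A_3$: using the explicit left-symmetric product from Lemma \ref{lemma6},
\[
e_1 \cdot e_2 = e_3,\quad e_1 \cdot e_3 = -e_2,\quad e_2 \cdot e_2 = e_3 \cdot e_3 = \varepsilon e_1,
\]
I would determine the set $T(A_3) \cap Z(\mathcal{E}(2))$, i.e. the left-symmetric center $C(A_3)$. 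A quick inspection shows that no nonzero element $x = \sum x_i e_i$ can satisfy $x \cdot y = y \cdot x = 0$ for all $y$: if $x_1 \neq 0$ then $x \cdot e_2$ has a nonzero $e_3$-component; if $x_2 \neq 0$ then $e_2 \cdot x$ (or $x \cdot e_2$) is nonzero (the $e_3$-coefficient, or the $e_1$-coefficient via $e_2\cdot e_2$); similarly for $x_3 \neq 0$ using $e_1 \cdot e_3 = -e_2$ and $e_3 \cdot e_3 = \varepsilon e_1$. Hence $C(A_3) = 0$.

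Since $C(A_3) = 0$, the condition "$x \cdot y = y \cdot x = 0$ for all $y$" appearing in the definition of $I_{[g]}$ already forces $x = 0$, so $I_{[g]} = 0$ regardless of $g$. Invoking the Remark that characterizes exactness as $I_{[g]} = 0$, we conclude that the central extension $0 \to I_0 \to A_4 \to A_3 \to 0$ is exact, i.e. $i(I_0) = C(A_4)$. Since we already established $\lambda = \rho = 0$ (so the $A_3$-bimodule on $I_0$ is trivial and the product on $I_0$ is trivial), all hypotheses needed to apply the exactness criterion are in place, and nothing further is required.

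The main obstacle — really the only point to be careful about — is making sure the computation of $C(A_3)$ is done against \emph{every} element $y$ and in \emph{both} orders $x\cdot y$ and $y\cdot x$, and that it covers both cases $\varepsilon = 0$ and $\varepsilon \neq 0$ uniformly; the terms $e_1 \cdot e_2 = e_3$ and $e_1 \cdot e_3 = -e_2$ are present in both cases, and they alone already kill the $e_1$- and (together with $e_2\cdot e_2$) the $e_2$- and $e_3$-directions, so the argument is genuinely $\varepsilon$-independent. One should also note explicitly, as the Remark does, that $I_{[g]}$ does not depend on the choice of representative $g$, so that the vanishing of the first (bracket/product) condition is enough to conclude without ever examining $g$ itself. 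With these observations the claim follows immediately.
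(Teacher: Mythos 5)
Your proposal is correct and is essentially the paper's own proof: both reduce exactness to the criterion $I_{[g]}=0$ and verify by direct computation in $A_3$ that the condition $x\cdot y=y\cdot x=0$ for all $y$ already forces $x=0$, with no need to examine $g$. One small caution: when $\varepsilon=0$ the products $e_2\cdot e_2=e_3\cdot e_3=\varepsilon e_1$ give no information, so the clean way to phrase the computation is that $e_1\cdot x=x_2e_3-x_3e_2=0$ kills the $e_2$- and $e_3$-components and then $x\cdot e_2=x_1e_3=0$ kills the $e_1$-component, which works uniformly in $\varepsilon$.
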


\begin{proof}
We recall from Subsection 3.1 that the extension given by the short sequence
(\ref{seq3}) is exact, i.e. $i\left( I_{0}\right) =C\left( A_{4}\right) $,
if and only if $I_{\left[ g\right] }=0,$ where%
\begin{equation*}
I_{\left[ g\right] }=\left\{ x\in A_{3}:x\cdot y=y\cdot x=0\ \text{and }%
g\left( x,y\right) =g\left( y,x\right) =0,\ \text{for all }y\in
A_{3}\right\} .
\end{equation*}%
To show that $I_{\left[ g\right] }=0$, let $x$ be an arbitrary element in $%
I_{\left[ g\right] }$, and put $x=ae_{1}+be_{2}+ce_{3}\in I_{\left[ g\right]
}.$ Now, by computing all the products $x\cdot e_{i}=e_{i}\cdot x=0,$ $1\leq
i\leq 3,$ we easily deduce that $x=0.$
\end{proof}

\bigskip

Our aim is to classify the complete left-symmetric structures on $\mathcal{O}%
_{4},$ up to left-symmetric isomorphisms. By Proposition \ref{prop3}, the
classification of the exact central extensions of $A_{3}$ by $I_{0}$ is, up
to left-symmetric isomorphism, the orbit space of $H_{ex}^{2}\left(
A_{3},I_{0}\right) $ under the natural action of $G=Aut\left( I_{0}\right)
\times Aut\left( A_{3}\right) .$ Accordingly, we must compute $%
H_{ex}^{2}\left( A_{3},I_{0}\right) .$ Since $I_{0}$ is a trivial $A_{3}$%
-bimodule, we see first from formulae (\ref{delta1}) and (\ref{delta2}) in
Section 3 that the coboundary operator $\delta $ simplifies as follows: 
\begin{eqnarray*}
\delta _{1}h\left( x,y\right)  &=&-h\left( x\cdot y\right) ,\  \\
\delta _{2}g\left( x,y,z\right)  &=&g\left( x,y\cdot z\right) -g\left(
y,x\cdot z\right) -g\left( \left[ x,y\right] ,z\right) ,
\end{eqnarray*}%
where $h\in L^{1}\left( A_{3},I_{0}\right) $ and $g\in L^{2}\left(
A_{3},I_{0}\right) .$

\bigskip

In view of Lemma \ref{lemma6}, there are two cases to be considered.

\smallskip

\smallskip

\bigskip \textbf{Case 1.} $A_{3}=\left\langle e_{1},e_{2},e_{3}:e_{1}\cdot
e_{2}=e_{3},~e_{1}\cdot e_{3}=-e_{2}\right\rangle .$

\smallskip

In this case, using the first formula above for $\delta _{1}$, we get%
\begin{equation*}
\delta _{1}h=\left( 
\begin{array}{ccc}
0 & h_{12} & h_{13} \\ 
0 & 0 & 0 \\ 
0 & 0 & 0%
\end{array}%
\right) ,
\end{equation*}%
where $h_{12}=-h\left( e_{3}\right) $ and $h_{13}=h\left( e_{2}\right) .$
Similarly, using the second formula above for $\delta _{2}$, we verify
easily that if $g$ is a cocycle (i.e. $\delta _{2}g=0$) and $g_{ij}=g\left(
e_{i},e_{j}\right) $, then 
\begin{equation*}
g=\left( 
\begin{array}{ccc}
g_{11} & g_{12} & g_{13} \\ 
0 & g_{22} & g_{23} \\ 
0 & -g_{23} & g_{22}%
\end{array}%
\right) ,
\end{equation*}%
that is, $g_{21}=g_{31}=0,$ $g_{32}=-g_{23},$ and $g_{33}=g_{22}.$ We deduce
that, in the basis above, the class $\left[ g\right] \in H^{2}\left( A_{3},%
\mathbb{R}\right) $ of a cocycle $g$ may be represented by a matrix of the
simplified form

\begin{equation*}
g=\left( 
\begin{array}{ccc}
\alpha & 0 & 0 \\ 
0 & \beta & \gamma \\ 
0 & -\gamma & \beta%
\end{array}%
\right) .
\end{equation*}

We can now determine the extended left-symmetric structure on $A_{4}.$ By
setting $\widetilde{e}_{i}=\left( e_{i},0\right) $, $1\leq i\leq 3,$ and $%
\widetilde{e}_{4}=\left( 0,1\right) ,$ and using formula (\ref{eq11}) which
(since $\lambda =\rho =0$) reduces to 
\begin{equation}
\left( x,ae_{0}\right) \cdot \left( y,be_{0}\right) =\left( x\cdot y,g\left(
x,y\right) \right) ,  \label{eq11bis}
\end{equation}%
we obtain 
\begin{eqnarray}
\widetilde{e}_{1}\cdot \widetilde{e}_{1} &=&\alpha \widetilde{e}_{4},\ 
\widetilde{e}_{2}\cdot \widetilde{e}_{2}=\widetilde{e}_{3}\cdot \widetilde{e}%
_{3}=\beta \widetilde{e}_{4}\   \notag \\
\widetilde{e}_{1}\cdot \widetilde{e}_{2} &=&\widetilde{e}_{3},\ \ \widetilde{%
e}_{1}\cdot \widetilde{e}_{3}=-\widetilde{e}_{2},  \label{eq16} \\
\widetilde{e}_{2}\cdot \widetilde{e}_{3} &=&\gamma \widetilde{e}_{4},\ \ 
\widetilde{e}_{3}\cdot \widetilde{e}_{2}=-\gamma \widetilde{e}_{4},  \notag
\end{eqnarray}%
and all the other products are zero. We observe here that we should have $%
\gamma \neq 0,$ given that the underlying Lie algebra is $\mathcal{O}_{4}.$
We denote by $A_{4}\left( \alpha ,\beta ,\gamma \right) $ the Lie algebra $%
\mathcal{O}_{4}$ endowed with the above complete left-symmetric
product.\bigskip 

Let now $A_{4}\left( \alpha ,\beta ,\gamma \right) $ and $A_{4}\left( \alpha
^{\prime },\beta ^{\prime },\gamma ^{\prime }\right) $ be two arbitrary
left-symmetric structures on $\mathcal{O}_{4}$ given as above, and let $%
\left[ g\right] $ and $\left[ g^{\prime }\right] $ be the corresponding
classes in $H_{ex}^{2}\left( A_{3},I_{0}\right) $. By Proposition \ref{prop3}%
, we know that $A_{4}\left( \alpha ,\beta ,\gamma \right) $ is isomorphic to 
$A_{4}\left( \alpha ^{\prime },\beta ^{\prime },\gamma ^{\prime }\right) $
if and only if the exists $\left( \mu ,\eta \right) \in Aut\left(
I_{0}\right) \times Aut\left( A_{3}\right) $ such that for all $x,y\in A_{3},
$ we have 
\begin{equation}
g^{\prime }\left( x,y\right) =\mu \left( g\left( \eta \left( x\right) ,\eta
\left( y\right) \right) \right) .  \label{action2}
\end{equation}

We shall first determine $Aut\left( I_{0}\right) \times Aut\left(
A_{3}\right) .$ We have $Aut\left( I_{0}\right) \cong \mathbb{R}^{\ast },$
and it is easy too to determine $Aut\left( A_{3}\right) .$ Indeed, recall
that the unique left-symmetric structure of $A_{3}$ is given by $e_{1}\cdot
e_{2}=e_{3},\ \ e_{1}\cdot e_{3}=-e_{2},$ and let $\eta \in Aut\left(
A_{3}\right) $ be given, in the basis $\left\{ e_{1},e_{2},e_{3}\right\} ,$
by 
\begin{equation*}
\eta =\left( 
\begin{array}{ccc}
a_{1} & b_{1} & c_{1} \\ 
a_{2} & b_{2} & c_{2} \\ 
a_{3} & b_{3} & c_{3}%
\end{array}%
\right) .
\end{equation*}

From the identity $\eta \left( e_{3}\right) =\eta \left( e_{1}\cdot
e_{2}\right) =\eta \left( e_{1}\right) \cdot \eta \left( e_{2}\right) ,$ we
get that $c_{1}=0,$ $c_{2}=-a_{1}b_{3},$ and $c_{3}=a_{1}b_{2}.$ From the
identity $-\eta \left( e_{2}\right) =\eta \left( e_{1}\cdot e_{3}\right)
=\eta \left( e_{1}\right) \cdot \eta \left( e_{3}\right) $ we get that $%
b_{1}=0,$ $b_{2}=a_{1}c_{3},$ and $b_{3}=-a_{1}c_{2}.$ Since $\det \eta \neq
0,$ we deduce that $a_{1}=\pm 1.$ It follows, by setting $\varepsilon =\pm
1, $ that $b_{3}=-\varepsilon c_{2}$ and $c_{3}=\varepsilon b_{2}.$ From the
identity $\eta \left( e_{1}\right) \cdot \eta \left( e_{1}\right) =\eta
\left( e_{1}\cdot e_{1}\right) =0,$ we obtain that $a_{2}=a_{3}=0.$
Therefore, $\eta $ takes the form%
\begin{equation*}
\eta =\left( 
\begin{array}{ccc}
\varepsilon & 0 & 0 \\ 
0 & b_{2} & c_{2} \\ 
0 & -\varepsilon c_{2} & \varepsilon b_{2}%
\end{array}%
\right) ,
\end{equation*}%
with $b_{2}^{2}+c_{2}^{2}\neq 0.$

We shall now apply formula (\ref{action2}). For this we recall first that,
in the basis above, the classes $\left[ g\right] $ and $\left[ g^{\prime }%
\right] $ corresponding, respectively, to $A_{4}\left( \alpha ,\beta ,\gamma
\right) $ and $A_{4}\left( \alpha ^{\prime },\beta ^{\prime },\gamma
^{\prime }\right) $ have, respectively, the forms 
\begin{equation*}
g=\left( 
\begin{array}{ccc}
\alpha & 0 & 0 \\ 
0 & \beta & \gamma \\ 
0 & -\gamma & \beta%
\end{array}%
\right) ~~~\text{and }~~g^{\prime }=\left( 
\begin{array}{ccc}
\alpha ^{\prime } & 0 & 0 \\ 
0 & \beta ^{\prime } & \gamma ^{\prime } \\ 
0 & -\gamma ^{\prime } & \beta ^{\prime }%
\end{array}%
\right) .
\end{equation*}

From $g^{\prime }\left( e_{1},e_{1}\right) =\mu g\left( \eta \left(
e_{1}\right) ,\eta \left( e_{1}\right) \right) ,$ we get 
\begin{equation}
\alpha ^{\prime }=\mu \alpha ,  \label{eq13}
\end{equation}%
and from $g^{\prime }\left( e_{2},e_{2}\right) =\mu g\left( \eta \left(
e_{2}\right) ,\eta \left( e_{2}\right) \right) $, we get 
\begin{equation}
\beta ^{\prime }=\mu \left( b_{2}^{2}+c_{2}^{2}\right) \beta .  \label{eq14}
\end{equation}

Similarly, from $g^{\prime }\left( e_{2},e_{3}\right) =\mu g\left( \eta
\left( e_{2}\right) ,\eta \left( e_{3}\right) \right) $ we get 
\begin{equation}
\gamma ^{\prime }=\mu \varepsilon \left( b_{2}^{2}+c_{2}^{2}\right) \gamma .
\label{eq15}
\end{equation}

Recall here that $\mu \neq 0$, $\gamma \neq 0,$ and $b_{2}^{2}+c_{2}^{2}\neq
0.$

\begin{claim}
\label{claim}\bigskip Each $A_{4}\left( \alpha ,\beta ,\gamma \right) $ is
isomorphic to some $A_{4}\left( \alpha ^{\prime },\beta ^{\prime },1\right)
. $ Precisely, $A_{4}\left( \alpha ,\beta ,\gamma \right) $ is isomorphic to 
$A_{4}\left( \varepsilon \frac{\alpha }{\gamma },\varepsilon \frac{\beta }{%
\gamma },1\right) .$
\end{claim}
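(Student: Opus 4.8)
The plan is to use the relations \eqref{eq13}, \eqref{eq14}, \eqref{eq15} together with the explicit description of $Aut(I_0)\times Aut(A_3)$ obtained above, and simply exhibit the automorphism that conjugates $A_4(\alpha,\beta,\gamma)$ onto $A_4\!\left(\varepsilon\frac{\alpha}{\gamma},\varepsilon\frac{\beta}{\gamma},1\right)$. Concretely, we want $\gamma'=1$, so by \eqref{eq15} we must choose $\mu,\varepsilon,b_2,c_2$ with $\mu\varepsilon(b_2^2+c_2^2)\gamma=1$; the simplest choice is $b_2=1$, $c_2=0$ (so that $b_2^2+c_2^2=1\neq 0$, as required), $\varepsilon=\pm1$ arbitrary, and $\mu=\varepsilon/\gamma$ (legitimate since $\gamma\neq 0$, and then $\mu\neq 0$ as needed for $\mu\in Aut(I_0)\cong\mathbb{R}^\ast$). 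Feeding this back into \eqref{eq13} and \eqref{eq14} gives $\alpha'=\mu\alpha=\varepsilon\frac{\alpha}{\gamma}$ and $\beta'=\mu(b_2^2+c_2^2)\beta=\varepsilon\frac{\beta}{\gamma}$, which is exactly the asserted target.

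The steps, in order, are: (1) recall that by Proposition~\ref{prop3} (applied to $K=A_3$, $E=I_0$) two such structures $A_4(\alpha,\beta,\gamma)$ and $A_4(\alpha',\beta',\gamma')$ are left-symmetric isomorphic if and only if there is $(\mu,\eta)\in Aut(I_0)\times Aut(A_3)$ satisfying \eqref{action2}; (2) recall from the computation just above that such an $\eta$ is parametrized by $(\varepsilon,b_2,c_2)$ with $\varepsilon=\pm1$ and $b_2^2+c_2^2\neq 0$, and that \eqref{action2} forces precisely the three scalar equations \eqref{eq13}, \eqref{eq14}, \eqref{eq15}; (3) plug in $b_2=1$, $c_2=0$, $\mu=\varepsilon/\gamma$; (4) read off $\alpha'=\varepsilon\alpha/\gamma$, $\beta'=\varepsilon\beta/\gamma$, $\gamma'=1$, verifying all admissibility constraints ($\mu\neq 0$, $b_2^2+c_2^2\neq 0$, and $\gamma'=1\neq 0$ consistent with the underlying Lie algebra being $\mathcal{O}_4$). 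This shows $A_4(\alpha,\beta,\gamma)\cong A_4(\varepsilon\frac{\alpha}{\gamma},\varepsilon\frac{\beta}{\gamma},1)$, and since $\varepsilon$ may be taken to be either sign this gives the stated normal form.

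There is no real obstacle here — the claim is essentially a bookkeeping consequence of the structure of $Aut(A_3)$ already determined. The only point that needs a word of care is ensuring that the chosen parameters actually define an element of $Aut(I_0)\times Aut(A_3)$: one must check $\mu=\varepsilon/\gamma\in\mathbb{R}^\ast$ (immediate from $\gamma\neq 0$) and that $(b_2,c_2)=(1,0)$ satisfies $b_2^2+c_2^2\neq 0$ (immediate), so that the corresponding $\eta$ is genuinely invertible. One might also remark that the value of $\varepsilon$ is free, which foreshadows the sign ambiguity appearing in the final conjugacy statement; but for the claim as stated, any single choice of $\varepsilon\in\{+1,-1\}$ suffices.
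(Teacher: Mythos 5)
Your argument is correct and is essentially identical to the paper's own proof: both choose $b_2^{2}+c_{2}^{2}=1$ (the paper takes $b=\cos\theta_{0}$, $c=\sin\theta_{0}$, you take $(1,0)$), solve equation (\ref{eq15}) for $\mu=\varepsilon/\gamma$, and substitute into (\ref{eq13}) and (\ref{eq14}) to obtain $\alpha'=\varepsilon\alpha/\gamma$ and $\beta'=\varepsilon\beta/\gamma$. No issues.
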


\begin{proof}
By (\ref{eq13}), (\ref{eq14}), and (\ref{eq15}), $A_{4}\left( \alpha ,\beta
,\gamma \right) $ is isomorphic to $A_{4}\left( \alpha ^{\prime },\beta
^{\prime },1\right) $ if and only if there exists $\mu \in \mathbb{R}^{\ast
} $ and $b,c\in \mathbb{R}$, with $b^{2}+c^{2}\neq 0,$ such that 
\begin{eqnarray*}
\alpha ^{\prime } &=&\mu \alpha , \\
\beta ^{\prime } &=&\mu \left( b^{2}+c^{2}\right) \beta , \\
1 &=&\mu \varepsilon \left( b^{2}+c^{2}\right) \gamma .
\end{eqnarray*}

Now, by taking $b^{2}+c^{2}=1$ (for instance, $b=\cos \theta _{0}$ and $%
c=\sin \theta _{0}$ for some $\theta _{0}$), the third equation yields $\mu =%
\frac{\varepsilon }{\gamma }.$ Substituting the value of $\mu $ in the two
first equations, we deduce that $\alpha ^{\prime }=\varepsilon \frac{\alpha 
}{\gamma }$ and $\beta ^{\prime }=\varepsilon \frac{\beta }{\gamma }.$
Consequently, each $A_{4}\left( \alpha ,\beta ,\gamma \right) $ is
isomorphic to $A_{4}\left( \varepsilon \frac{\alpha }{\gamma },\varepsilon 
\frac{\beta }{\gamma },1\right) .$
\end{proof}

\bigskip

\textbf{Case 2.} $A_{3}=\left\langle e_{1},e_{2},e_{3}:e_{1}\cdot
e_{2}=e_{3},~e_{1}\cdot e_{3}=-e_{2},~e_{2}\cdot e_{2}=e_{3}\cdot
e_{3}=e_{1}\right\rangle .$

\smallskip

Similarly to the first case, we get 
\begin{equation*}
\delta _{1}h=\left( 
\begin{array}{ccc}
0 & h_{12} & h_{13} \\ 
0 & h_{22} & 0 \\ 
0 & 0 & h_{22}%
\end{array}%
\right) ,~\text{~}~\text{and ~}g=\left( 
\begin{array}{ccc}
0 & g_{12} & g_{13} \\ 
0 & g_{22} & g_{23} \\ 
0 & -g_{23} & g_{22}%
\end{array}%
\right) ,
\end{equation*}%
where $h_{12}=-h\left( e_{3}\right) ,~h_{13}=h\left( e_{2}\right)
,~h_{22}=-h\left( e_{1}\right) ,$ and $g_{ij}=g\left( e_{i},e_{j}\right) .$
It follows that, in this case, the class $\left[ g\right] \in H^{2}\left(
A_{3},\mathbb{R}\right) $ of a cocycle $g$ takes the reduced form

\begin{equation*}
g=\left( 
\begin{array}{ccc}
0 & 0 & 0 \\ 
0 & 0 & \gamma \\ 
0 & -\gamma & 0%
\end{array}%
\right) ,~~~\gamma \neq 0.
\end{equation*}

\bigskip By setting $\widetilde{e}_{i}=\left( e_{i},0\right) $, $1\leq i\leq
3,$ and $\widetilde{e}_{4}=\left( 0,1\right) ,$ and using formula (\ref%
{eq11bis}) we find that the nonzero relations are 
\begin{eqnarray}
\widetilde{e}_{1}\cdot \widetilde{e}_{2} &=&\widetilde{e}_{3},~\widetilde{e}%
_{1}\cdot \widetilde{e}_{3}=-\widetilde{e}_{2},~\widetilde{e}_{2}\cdot 
\widetilde{e}_{2}=\widetilde{e}_{3}\cdot \widetilde{e}_{3}=\widetilde{e}%
_{1}\   \label{eq16bis} \\
\widetilde{e}_{2}\cdot \widetilde{e}_{3} &=&\gamma \widetilde{e}_{4},\ \ 
\widetilde{e}_{3}\cdot \widetilde{e}_{2}=-\gamma \widetilde{e}_{4},  \notag
\end{eqnarray}%
with $\gamma \neq 0.$

\bigskip

We can now state the main result of this paper.

\begin{theorem}
\label{thm2}Let $A_{4}$ be a complete non-simple real left-symmetric algebra
whose associated Lie algebra is $\mathcal{O}\left( 4\right) .$ Then $A_{4}$
is isomorphic to one of the following left-symmetric algebras:

\begin{description}
\item[(i)] $A_{4}\left( s,t\right) :$ There exist real numbers $s,t$, and a
basis $\left\{ e_{1},e_{2},e_{3},e_{4}\right\} $ of $\mathcal{O}\left(
4\right) $ relative to which the nonzero left-symmetric relations are 
\begin{eqnarray*}
e_{1}\cdot e_{1} &=&se_{4},\ \ e_{2}\cdot e_{2}=e_{3}\cdot e_{3}=te_{4} \\
e_{1}\cdot e_{2} &=&e_{3},\ \ e_{1}\cdot e_{3}=-e_{2}, \\
e_{2}\cdot e_{3} &=&\frac{1}{2}e_{4},\ \ e_{3}\cdot e_{2}=-\frac{1}{2}e_{4}.
\end{eqnarray*}

\item The conjugacy class of $A_{4}\left( s,t\right) $ is given as follows: $%
A_{4}\left( s^{\prime },t^{\prime }\right) $ is isomorphic to $A_{4}\left(
s,t\right) $ if and only if $\left( s^{\prime },t^{\prime }\right) =\left(
\alpha s,\pm t\right) $ for some $\alpha \in \mathbb{R}^{\ast }.$

\item[(ii)] $B_{4}:$ There is a basis $\left\{
e_{1},e_{2},e_{3},e_{4}\right\} $ of $\mathcal{O}\left( 4\right) $ relative
to which the nonzero left-symmetric relations are%
\begin{eqnarray*}
e_{1}\cdot e_{2} &=&e_{3},\ \ e_{1}\cdot e_{3}=-e_{2},\ e_{2}\cdot
e_{2}=e_{3}\cdot e_{3}=e_{1} \\
e_{2}\cdot e_{3} &=&\frac{1}{2}e_{4},\ \ e_{3}\cdot e_{2}=-\frac{1}{2}e_{4}.
\end{eqnarray*}
\end{description}
\end{theorem}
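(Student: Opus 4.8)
The plan is to assemble the two cases already set up above — the normal forms (\ref{eq16}) in Case 1 and (\ref{eq16bis}) in Case 2 — and to finish by classifying each family up to isomorphism via Proposition \ref{prop3}. Nothing is lost by restricting to these two cases: by the results of Section 4 together with Claim \ref{claim2}, every complete non-simple left-symmetric structure on $\mathcal{O}_4$ has been realized as an exact central extension $0\rightarrow I_0\rightarrow A_4\rightarrow A_3\rightarrow 0$ in which $A_3$ is one of the two complete left-symmetric structures on $\mathcal{E}(2)$ listed in Lemma \ref{lemma6}; so it suffices to treat $\varepsilon=0$ and $\varepsilon\neq 0$ separately and to observe at the end that the resulting algebras for the two values of $\varepsilon$ are never isomorphic.

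For the case $\varepsilon\neq 0$ (Case 2) I would first compute $\mathrm{Aut}(A_3)$ for the structure $e_1\cdot e_2=e_3$, $e_1\cdot e_3=-e_2$, $e_2\cdot e_2=e_3\cdot e_3=e_1$: requiring a linear map to preserve every product forces it to fix $\mathbb{R}e_1$ pointwise and to act as an element of $SO(2)$ on $\mathrm{span}\{e_2,e_3\}$, so $\mathrm{Aut}(I_0)\times\mathrm{Aut}(A_3)\cong\mathbb{R}^{\ast}\times SO(2)$. Plugging this into the action (\ref{action2}) on the one-parameter family (\ref{eq16bis}): the $SO(2)$-part fixes the cocycle $g$ (its only nonzero entries form the skew block on $\mathrm{span}\{e_2,e_3\}$, which is $SO(2)$-invariant there), while $\mu\in\mathbb{R}^{\ast}$ multiplies $\gamma$ by $\mu$. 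Hence all the algebras (\ref{eq16bis}) with $\gamma\neq 0$ are mutually isomorphic, and choosing the representative with $\widetilde e_2\cdot\widetilde e_3=\tfrac12\widetilde e_4$ gives exactly the algebra $B_4$ of part (ii).

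For the case $\varepsilon=0$ (Case 1), Claim \ref{claim} already reduces every $A_4(\alpha,\beta,\gamma)$ with $\gamma\neq 0$ to some $A_4(\alpha',\beta',1)$, and rescaling $\widetilde e_4$ to normalize the coefficient of $\widetilde e_2\cdot\widetilde e_3$ to $\tfrac12$ puts the product into the form of part (i) with $s,t$ ranging over all of $\mathbb{R}$ (so (i) indeed exhausts this case). The conjugacy relation is then read off from (\ref{eq13}), (\ref{eq14}), (\ref{eq15}): after the normalization the residual action of $\mathrm{Aut}(I_0)\times\mathrm{Aut}(A_3)$ is $(s,t)\mapsto(\mu s,(\operatorname{sgn}\mu)\,t)$ with $\mu\in\mathbb{R}^{\ast}$, i.e. $s$ may be multiplied by any nonzero real while $t$ changes sign precisely when $\mu<0$ — this is the assertion $(s',t')=(\alpha s,\pm t)$ for some $\alpha\in\mathbb{R}^{\ast}$. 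Finally I would note that $A_4(s,t)\not\cong B_4$ since $A_4(s,t)\cdot A_4(s,t)=\mathrm{span}\{e_2,e_3,e_4\}$ has dimension $3$ whereas $B_4\cdot B_4=B_4$.

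The main obstacle is the sign/scaling bookkeeping in Case 1: one must track the three independent parameters hidden in $\mathrm{Aut}(I_0)\times\mathrm{Aut}(A_3)$ — the scalar $\mu$, the reflection sign $\varepsilon=\pm1$, and the positivity of $b_2^2+c_2^2$ — see how the equation $\gamma'=\mu\varepsilon(b_2^2+c_2^2)\gamma=1$ couples them, and deduce from this exactly which pairs $(s,t)$ share an orbit; the $\tfrac12$-normalization of the $\widetilde e_2\cdot\widetilde e_3$-coefficient is cosmetic but must be carried through consistently. Checking that Cases 1 and 2 are genuinely exhaustive is not additional work — it is exactly Lemma \ref{lemma6} together with Claim \ref{claim2}.
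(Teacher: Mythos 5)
Your proposal is correct and follows essentially the same route as the paper: reduce to the two normal forms coming from the two complete left-symmetric structures on $\mathcal{E}(2)$, then classify each family as an orbit of $H^{2}_{ex}(A_{3},I_{0})$ under $Aut(I_{0})\times Aut(A_{3})$ using Proposition \ref{prop3} and equations (\ref{eq13})--(\ref{eq15}). Your extra observations --- that $Aut(A_{3})\cong SO(2)$ in Case 2, the explicit coupling $t'=\operatorname{sgn}(\alpha)\,t$ in Case 1, and the derived-subalgebra argument showing $A_{4}(s,t)\not\cong B_{4}$ --- are all sound and, if anything, slightly more careful than the paper's own write-up.
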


\begin{proof}
According to the discussion above, there are two cases to be considered.

\textbf{Case 1.} $A_{3}=\left\langle e_{1},e_{2},e_{3}:e_{1}\cdot
e_{2}=e_{3},~e_{1}\cdot e_{3}=-e_{2}\right\rangle .$

In this case, Claim \ref{claim} asserts that $A_{4}$ is isomorphic to some $%
A_{4}\left( \alpha ,\beta ,1\right) $; and according to equations (\ref{eq16}%
), we know that there is a basis $\left\{ \widetilde{e}_{1},\widetilde{e}%
_{2},\widetilde{e}_{3},\widetilde{e}_{4}\right\} $ of $\mathcal{O}_{4}$
relative to which the nonzero relations for $A_{4}\left( \alpha ,\beta
,1\right) $ are: 
\begin{eqnarray*}
\widetilde{e}_{1}\cdot \widetilde{e}_{1} &=&\alpha \widetilde{e}_{4},\ 
\widetilde{e}_{2}\cdot \widetilde{e}_{2}=\widetilde{e}_{3}\cdot \widetilde{e}%
_{3}=\beta \widetilde{e}_{4}\  \\
\widetilde{e}_{1}\cdot \widetilde{e}_{2} &=&\widetilde{e}_{3},\ \ \widetilde{%
e}_{1}\cdot \widetilde{e}_{3}=-\widetilde{e}_{2}, \\
\widetilde{e}_{2}\cdot \widetilde{e}_{3} &=&\widetilde{e}_{4},\ \ \widetilde{%
e}_{3}\cdot \widetilde{e}_{2}=-\widetilde{e}_{4}.
\end{eqnarray*}%
Now, it is clear that by setting $s=\frac{\alpha }{2}$, $t=\frac{\beta }{2},$
$e_{i}=\widetilde{e}_{i}$ for $1\leq i\leq 3,$ and $e_{4}=2\widetilde{e}_{4},
$ we get the desired two-parameter family $A_{4}\left( s,t\right) .$

On the other hand, we see from equations (\ref{eq13}), (\ref{eq14}), and (%
\ref{eq15}) that $A_{4}\left( s^{\prime },t^{\prime }\right) $ is isomorphic
to $A_{4}\left( s,t\right) $ if and only if exists $\alpha \in \mathbb{R}%
^{\ast }$ and $b,c\in \mathbb{R}$, with $b^{2}+c^{2}\neq 0,$ such that 
\begin{eqnarray*}
s^{\prime } &=&\alpha s, \\
t^{\prime } &=&\alpha \left( b^{2}+c^{2}\right) t, \\
1 &=&\alpha \varepsilon \left( b^{2}+c^{2}\right) .
\end{eqnarray*}

From the third equation, we get $b^{2}+c^{2}=\frac{\varepsilon }{\alpha };$
and by substituting the value of $b^{2}+c^{2}$ in the second equation, we
get $t^{\prime }=\varepsilon t.$ In other words, we have shown that $%
A_{4}\left( s^{\prime },t^{\prime }\right) $ is isomorphic to $A_{4}\left(
s,t\right) $ if and only if exists $\alpha \in \mathbb{R}^{\ast }$ such that 
$s^{\prime }=\alpha s$ and $t^{\prime }=\pm t.\smallskip $

\textbf{Case 2.} $A_{3}=\left\langle e_{1},e_{2},e_{3}:e_{1}\cdot
e_{2}=e_{3},~e_{1}\cdot e_{3}=-e_{2},~e_{2}\cdot e_{2}=e_{3}\cdot
e_{3}=e_{1}\right\rangle .$

In this case, by (\ref{eq16bis}), there is a basis $\left\{ \widetilde{e}%
_{1},\widetilde{e}_{2},\widetilde{e}_{3},\widetilde{e}_{4}\right\} $ of $%
\mathcal{O}_{4}$ relative to which the nonzero relations in $A_{4}$ are:%
\begin{eqnarray*}
\widetilde{e}_{1}\cdot \widetilde{e}_{2} &=&\widetilde{e}_{3},~\widetilde{e}%
_{1}\cdot \widetilde{e}_{3}=-\widetilde{e}_{2},~\widetilde{e}_{2}\cdot 
\widetilde{e}_{2}=\widetilde{e}_{3}\cdot \widetilde{e}_{3}=\widetilde{e}%
_{1}\  \\
\widetilde{e}_{2}\cdot \widetilde{e}_{3} &=&\gamma \widetilde{e}_{4},\ \ 
\widetilde{e}_{3}\cdot \widetilde{e}_{2}=-\gamma \widetilde{e}_{4},
\end{eqnarray*}%
with $\gamma \neq 0.$

By setting $e_{i}=\widetilde{e}_{i}$ for $1\leq i\leq 3,$ and $e_{4}=2\gamma 
\widetilde{e}_{4},$ we see that $A_{4}$ is isomorphic to $B_{4}.$ This
finishes the proof of the main theorem. \bigskip
\end{proof}

\begin{remark}
Recall that a left-symmetric algebra $A$ is called \emph{Novikov} if it
satisfies the condition%
\begin{equation*}
\left( x\cdot y\right) \cdot z=\left( x\cdot z\right) \cdot y
\end{equation*}%
for all $x,y,z\in A.$ Novikov left-symmetric algebras were introduced in 
\cite{novikov} (see also \cite{zelmanov} for some important results
concerning this). We note here that $A_{4}\left( s,0\right) $ is Novikov and
that $B_{4}$ is not.
\end{remark}

\bigskip

We note that the mapping $X\mapsto \left( L_{X},X\right) $ is a Lie algebra
representation of $\mathcal{O}_{4}$ in $\mathfrak{aff}\left( \mathbb{R}%
^{4}\right) =End\left( \mathbb{R}^{4}\right) \oplus \mathbb{R}^{4}.$ By
using the (Lie group) exponential maps, Theorem \ref{thm2} can now be
stated, in terms of simply transitive actions of subgroups of the affine
group $Aff\left( \mathbb{R}^{4}\right) =GL\left( \mathbb{R}^{4}\right)
\ltimes \mathbb{R}^{4}$, as follows.

To state it, define the continuous functions $f,$ $g,$ $h,$ and $k$ by%
\begin{equation*}
f\left( x\right) =\left\{ 
\begin{array}{c}
\frac{\sin x}{x},\ \ \ x\neq 0 \\ 
1,\ \ \ \ x=0%
\end{array}%
\right. ,\ \ \ \ \ g\left( x\right) =\left\{ 
\begin{array}{c}
\frac{1-\cos x}{x},\ \ \ x\neq 0 \\ 
0,\ \ \ \ x=0%
\end{array}%
\right. ,
\end{equation*}%
and 
\begin{equation*}
h\left( x\right) =\left\{ 
\begin{array}{c}
\frac{x-\sin x}{x^{2}},\ \ \ x\neq 0 \\ 
0,\ \ \ \ x=0%
\end{array}%
\right. ,\ \ \ \ \ k\left( x\right) =\left\{ 
\begin{array}{c}
\frac{1-\cos x}{x^{2}},\ \ \ x\neq 0 \\ 
0,\ \ \ \ x=0%
\end{array}%
\right. ,
\end{equation*}%
and set 
\begin{eqnarray*}
\Phi _{t}\left( x\right) &=&\left( \frac{y}{2}+tz\right) g\left( x\right)
-\left( \frac{z}{2}-ty\right) f\left( x\right) , \\
\Psi _{t}\left( x\right) &=&\left( \frac{y}{2}+tz\right) f\left( x\right)
+\left( \frac{z}{2}-ty\right) g\left( x\right) .
\end{eqnarray*}

\begin{theorem}
\label{thm3}Suppose that the oscillator group $O_{4}$ acts simply
transitively by affine transformations on $\mathbb{R}^{4}.$ Then, as a
subgroup of $Aff\left( \mathbb{R}^{4}\right) =GL\left( \mathbb{R}^{4}\right)
\ltimes \mathbb{R}^{4},$ $O_{4}$ is conjugate to one of the following
subgroups:

\begin{description}
\item[(i)] 
\begin{equation*}
G_{4}=\left\{ 
\begin{array}{c}
\left[ 
\begin{array}{cccc}
1 & yf\left( x\right) +zg\left( x\right) & zf\left( x\right) -yg\left(
x\right) & 0 \\ 
0 & \cos x & -\sin x & 0 \\ 
0 & \sin x & \cos x & 0 \\ 
0 & \Phi _{0}\left( x\right) & \Psi _{0}\left( x\right) & 1%
\end{array}%
\right] \\ 
~\times \left[ 
\begin{array}{c}
x+\left( y^{2}+z^{2}\right) k\left( x\right) \\ 
yf\left( x\right) -zg\left( x\right) \\ 
zf\left( x\right) +yg\left( x\right) \\ 
w+\frac{\left( y^{2}+z^{2}\right) }{2}h\left( x\right)%
\end{array}%
\right] :\ x,y,z,w\in \mathbb{R}%
\end{array}%
\right\} ,
\end{equation*}

\item[(ii)] 
\begin{equation*}
G_{4}\left( s,t\right) =\left\{ 
\begin{array}{c}
\left[ 
\begin{array}{cccc}
1 & 0 & 0 & 0 \\ 
0 & \cos x & -\sin x & 0 \\ 
0 & \sin x & \cos x & 0 \\ 
sx & \Phi _{t}\left( x\right) & \Psi _{t}\left( x\right) & 1%
\end{array}%
\right] \times \\ 
\left[ 
\begin{array}{c}
x \\ 
yf\left( x\right) -zg\left( x\right) \\ 
zf\left( x\right) +yg\left( x\right) \\ 
w+\frac{s}{2}x^{2}+\left( y^{2}+z^{2}\right) \left( \frac{h\left( x\right) }{%
2}+tk\left( x\right) \right)%
\end{array}%
\right] :x,y,z,w\in \mathbb{R}%
\end{array}%
\right\} ,
\end{equation*}%
where $s,t\in \mathbb{R}.$ The only pairs of conjugate subgroups in $%
Aff\left( \mathbb{R}^{4}\right) $ are $G_{4}\left( s,t\right) $ and $%
G_{4}\left( \alpha s,\pm t\right) $ where $\alpha \in \mathbb{R}^{\ast }.$
\end{description}
\end{theorem}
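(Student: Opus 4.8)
The plan is to pass from the algebraic classification of Theorem~\ref{thm2} to the geometric one by integrating the natural affine representation attached to a complete left-symmetric structure. Recall from Section~3 that if $A$ is a complete left-symmetric algebra with associated Lie algebra $\mathcal{G}$, then the map $\rho:\mathcal{G}\rightarrow\mathfrak{aff}(\mathbb{R}^{n})=\mathrm{End}(\mathbb{R}^{n})\oplus\mathbb{R}^{n}$, $X\mapsto(L_{X},X)$, is a homomorphism of Lie algebras: its linear part is a representation precisely because of the left-symmetric identity $[L_{X},L_{Y}]=L_{[X,Y]}$, and its translational part works because $L_{X}Y-L_{Y}X=X\cdot Y-Y\cdot X=[X,Y]$. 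Completeness guarantees that the integration of $\rho$ to the simply connected group $G$ is a simply transitive action by affine transformations on $\mathbb{R}^{n}\cong A$, and conversely every simply transitive affine action of $G$ arises in this way from a complete left-symmetric structure on $\mathcal{G}$; moreover two such actions have conjugate images in $\mathrm{Aff}(\mathbb{R}^{n})$ if and only if the underlying left-symmetric algebras are isomorphic (see \cite{kim}, \cite{segal}). Granting this dictionary, Theorem~\ref{thm3} is the image of Theorem~\ref{thm2} under it, and the only real work is to write the exponentials explicitly.

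First I would compute, for each of the two models $A_{4}(s,t)$ and $B_{4}$, the left multiplications $L_{e_{1}},\dots,L_{e_{4}}$ as $4\times4$ matrices in the basis of Theorem~\ref{thm2}. For $A_{4}(s,t)$ one finds $L_{e_{4}}=0$; $L_{e_{1}}$ acts on the $e_{2}e_{3}$-plane as the standard infinitesimal rotation and has the single extra entry sending $e_{1}$ to $se_{4}$; and $L_{e_{2}},L_{e_{3}}$ are nilpotent with image in $\mathbb{R}e_{4}$. Hence for a general element $X=xe_{1}+ye_{2}+ze_{3}+we_{4}$ the operator $L_{X}$ preserves the $e_{2}e_{3}$-plane (acting there as $x$ times the rotation generator), annihilates $e_{4}$, and maps $e_{1}$ and the $e_{2}e_{3}$-plane into $\mathbb{R}e_{4}$. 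In $\mathrm{Aff}(\mathbb{R}^{4})=GL(\mathbb{R}^{4})\ltimes\mathbb{R}^{4}$ one has $\exp\rho(X)=\left(e^{L_{X}},\ \sum_{k\geq0}\tfrac{1}{(k+1)!}L_{X}^{k}X\right)$, so the linear part is $e^{L_{X}}$ (whose $e_{2}e_{3}$-block is the rotation of angle $x$, the surviving off-diagonal entries being power series in $x$ with coefficients linear in $y,z$) and the translation part is $\sum_{k\geq0}\tfrac{1}{(k+1)!}L_{X}^{k}X$ (again a vector of such series, with quadratic dependence on $y,z$ in the last slot). Collecting these series and using the Taylor expansions $\tfrac{\sin x}{x}=\sum_{m}\tfrac{(-1)^{m}x^{2m}}{(2m+1)!}$, $\tfrac{1-\cos x}{x}=\sum_{m}\tfrac{(-1)^{m}x^{2m+1}}{(2m+2)!}$, $\tfrac{x-\sin x}{x^{2}}=\sum_{m}\tfrac{(-1)^{m}x^{2m+1}}{(2m+3)!}$ and $\tfrac{1-\cos x}{x^{2}}=\sum_{m}\tfrac{(-1)^{m}x^{2m}}{(2m+2)!}$ one recognizes exactly the functions $f,g,h,k$ and the combinations $\Phi_{t},\Psi_{t}$, and reads off the subgroup $G_{4}(s,t)$. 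The same computation run with the multiplication table of $B_{4}$ (where now $L_{e_{2}}e_{2}=L_{e_{3}}e_{3}=e_{1}$ while $e_{1}\cdot e_{1}=0$, so that $e_{1}$ plays the role $e_{4}$ played before) produces the subgroup $G_{4}$.

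For the conjugacy clause, apply the last part of the dictionary: two of the exhibited subgroups are conjugate in $\mathrm{Aff}(\mathbb{R}^{4})$ precisely when the corresponding complete left-symmetric algebras are isomorphic. By Theorem~\ref{thm2}, $B_{4}$ is not isomorphic to any $A_{4}(s,t)$, so $G_{4}$ is conjugate to none of the $G_{4}(s,t)$; and $A_{4}(s',t')\cong A_{4}(s,t)$ if and only if $(s',t')=(\alpha s,\pm t)$ for some $\alpha\in\mathbb{R}^{\ast}$. Hence the only conjugate pairs among the listed subgroups are $G_{4}(s,t)$ and $G_{4}(\alpha s,\pm t)$.

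The step I expect to be the main obstacle is the explicit exponentiation: one must carry out the matrix exponential and the series $\sum_{k}\tfrac{1}{(k+1)!}L_{X}^{k}X$ carefully enough to match every entry against the closed forms $f,g,h,k,\Phi_{t},\Psi_{t}$ — routine but error-prone bookkeeping — and, in addition, one must check that letting $x$ run over all of $\mathbb{R}$ (rather than over a single period of the rotations) really parametrizes the whole oscillator group, i.e.\ that $\exp:\mathcal{O}_{4}\rightarrow O_{4}$ is surjective, so that the displayed sets are genuinely all of the image subgroups of $O_{4}$ in $\mathrm{Aff}(\mathbb{R}^{4})$.
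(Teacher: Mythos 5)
Your strategy is the same one the paper (tacitly) uses: the paper offers no more argument than the remark that Theorem~\ref{thm2} ``can now be stated'' via the exponential of the affine representation $X\mapsto\left(L_{X},X\right)$, and your computations correctly flesh this out --- the displayed linear parts are $e^{L_{X}}$ and the displayed translation parts are $\sum_{k\geq 0}\frac{1}{\left(k+1\right)!}L_{X}^{k}X$ for $X=xe_{1}+ye_{2}+ze_{3}+we_{4}$, and these series do resum to the functions $f,g,h,k$ and the combinations $\Phi_{t},\Psi_{t}$. The reduction of the conjugacy clause to the isomorphism classification of Theorem~\ref{thm2} via the standard dictionary between complete left-symmetric structures and simply transitive affine actions is also the intended route.

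However, the check you defer to the end --- surjectivity of $\exp:\mathcal{O}_{4}\rightarrow O_{4}$ --- actually \emph{fails}, and this is a genuine gap (one which the statement of the theorem inherits as written). The oscillator group is not of exponential type: the element $e_{1}$ of the basis in Theorem~\ref{thm2} acts by $\left[e_{1},e_{2}\right]=e_{3}$, $\left[e_{1},e_{3}\right]=-e_{2}$, so its adjoint has eigenvalues $\pm i$. Concretely, the linear map $\left(y,z\right)\mapsto\left(yf\left(x\right)-zg\left(x\right),zf\left(x\right)+yg\left(x\right)\right)$ occurring in the translation part has determinant $f\left(x\right)^{2}+g\left(x\right)^{2}=\left(2-2\cos x\right)/x^{2}$, which vanishes at $x=2\pi k$, $k\neq 0$; hence the translation parts of the displayed elements do not exhaust $\mathbb{R}^{4}$, which is impossible for a simply transitive subgroup. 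Indeed the displayed sets are not even closed under the affine product: in $G_{4}\left(0,0\right)$ the product of the elements with parameters $\left(x,y,z,w\right)=\left(\pi,1,0,0\right)$ and $\left(\pi,0,0,0\right)$ has translation part $\left(2\pi,0,2/\pi,1/\left(2\pi\right)\right)$, whose nonzero third coordinate is attained by no parameter value with $x=2\pi$. What your single exponential produces is only the proper subset $\exp\left(\mathrm{aff}\left(\mathcal{O}_{4}\right)\right)$ of the simply transitive subgroup. To close the gap you must parametrize the integrated subgroup globally, e.g.\ by coordinates of the second kind adapted to the chain $Z\left(\mathcal{O}_{4}\right)\subset\left[\mathcal{O}_{4},\mathcal{O}_{4}\right]\subset\mathcal{O}_{4}$ (a product of three exponentials, which by simple transitivity is a bijection onto $\mathbb{R}^{4}$ via the orbit of the origin), and then the explicit matrices come out different from --- though of course generating the same subgroup as --- the ones displayed. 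The conjugacy clause itself is unaffected by this repair.
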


\bigskip

\bigskip

Department of Mathematics,

College of Science,

King Saud University,

P.O.Box 2455, Riyadh 11451

Saudi Arabia

\smallskip

E-mail: mguediri@ksu.edu.sa


\begin{thebibliography}{99}
\bibitem{auslander} L. Auslander, \textsl{Simply transitive groups of affine
motions,} Amer. J. Math. \textbf{99} (1977) 809-826.

\bibitem{novikov} A.A. Balinskii, S.P. Novikov, \textsl{Poisson brackets of
hydrodynamic type, Frobenius algebras and Lie algebras,} Soviet Math. Dokl. 
\textbf{32} (1985), 228-231.

\bibitem{benoist} Y. Benoist, \textsl{Une nilvari\'{e}t\'{e} non affine,} J.
Differential Geometry \textbf{41} (1995), 21-52.

\bibitem{bordemann} M. Bordemann, \textsl{Generalized Lax pairs, the
modified classical Yang-Baxter equation, and affine geometry of Lie groups,}
Comm. Math. Phys. \textbf{135} (1990), 201-216.

\bibitem{burde} D. Burde, \textsl{Simple left-symmetric algebras with
solvable Lie algebra,} Manuscripta Math.\textsl{\ }\textbf{95} (1998),
397-411.

\bibitem{changkimlee} K. Chang, H. Kim, and H. Lee, \textsl{Radicals of a
left-symmetric algebra on a nilpotent Lie group}, Bull. Korean Math. Soc. 
\textbf{41} (2004), 359-369.

\bibitem{chevalley} C. Chevalley, S. Eilenberg, \textsl{Cohomology theory of
Lie groups and Lie algebras,} Trans. Amer. Math. Soc.\textsl{\ }\textbf{63}
(1948), 85-124.

\bibitem{de azcarraga} J. A. De Azcarraga, J. M. Izquierdo, \textsl{Lie
groups, Lie algebras, cohomology and applications in physics, }Cambridge
University Press 1995.

\bibitem{fried} D. Fried, \textsl{Distality, completeness, and affine
structures, }J. Differential Geometry \textbf{24} (1986), 265-273.

\bibitem{friedgold} D. Fried, W. Goldman, \textsl{Three dimensional affine
crystallographic groups,} Advances in Math. \textbf{47} (1983), 1-49.

\bibitem{gol-sokolov} I. Z. Golubschik, V. V. Sokolov, \textsl{Generalized
operator Yang-Baxter equations, integrable ODE's and nonassociative algebras,%
} J. Nonlinear Math. Phys. \textbf{7} (2000), 184-197.

\bibitem{jacobson} N. Jacobson, \textsl{Lie algebras,} Dover Publications,
Inc. New York 1979.

\bibitem{kim} H. Kim, \textsl{Complete left-invariant affine structures on
nilpotent Lie groups,} J. Differential Geometry \textbf{24} (1986), 373-394.

\bibitem{kbm} X. Kong, C. Bai, and D. Meng, \textsl{On real simple
left-symmetric algebras,} Comm. in Algebra \textbf{40} (2012), 1641-1668.

\bibitem{koszul} J. L. Koszul, \textsl{Domaines born\'{e}s homog\`{e}nes et
orbites de groupes de transformations affines,} Bull. Soc.Math. France 
\textbf{89} (1961), 515-533.

\bibitem{kuiper} N. Kuiper, \textsl{Sur les surfaces localement affines,}
Colloq. G\'{e}om. Diff\'{e}rentielle (1953), 79-87.

\bibitem{kupershmidt} B. A. Kupershmidt, \textsl{Left-symmetric algebras in
hydrodynamics,} Lett. Math. Phys. \textbf{76} (2006), 1-18.

\bibitem{milnor} J. Milnor, \textsl{On fundamental groups of complete
affinely flat manifolds,} Advances in Math. \textbf{25} (1977), 178-187.

\bibitem{mori} M. Mori, \textsl{On the three-dimensional cohomology group of
Lie algebras,} J. Math. Soc. Japan \textbf{5} (1953), 171-183.

\bibitem{neeb} K. H. Neeb, \textsl{Non-abelian extensions of topological Lie
algebras,} Comm. in Algebra \textbf{34} (2006), 991-1041.

\bibitem{segal} D. Segal, \textsl{The structure of complete left-symmetric
algebras,} Math. Ann. \textbf{293} (1992), 569-578.

\bibitem{vinberg} E. B. Vinberg, \textsl{Convex homogeneous cones, }Transl.
Moscow Math. Soc. \textbf{12} (1963), 340-403.

\bibitem{zelmanov} E. Zelmanov, \textsl{On a class of local translation
invariant Lie algebras,} Soviet Math. Dokl. \textbf{35} (1987), 216-218.
\end{thebibliography}
\end{document}